\newtheorem{thm}{Theorem}[section]
\newtheorem{lemma}[thm]{Lemma}
\newtheorem{conjecture}[thm]{Conjecture}
\newtheorem{proposition}[thm]{Proposition}
\newtheorem{cor}[thm]{Corollary}
\newtheorem{clm}[thm]{Claim}
\newcommand\ex{\ensuremath{\mathrm{ex}}}
\newcommand\cH{{\mathcal H}}
\newcommand\cN{{\mathcal N}}
\newcommand{\ignore}[1]{}
\title{The Turán number of Berge book hypergraphs}
\author{Dániel Gerbner}
\date{\small Alfr\'ed R\'enyi Institute of Mathematics}
\begin{document}

\maketitle

\begin{abstract}
Given a graph $G$, a Berge copy of $G$ is a hypergraph obtained by enlarging the edges arbitrarily. Gy\H ori in 2006 showed that for $r=3$ or $r=4$, an $r$-uniform $n$-vertex Berge triangle-free hypergraph has at most $\lfloor n^2/8(r-2)\rfloor$ hyperedges if $n$ is large enough, and this bound is sharp.

The book graph $B_t$ consists of $t$ triangles sharing an edge. Very recently, Ghosh, Győri, Nagy-György, Paulos, Xiao and Zamora showed that a 3--uniform $n$-vertex Berge $B_t$-free hypergraph has at most $n^2/8+o(n^2)$ hyperedges if $n$ is large enough. They conjectured that this bound can be improved to $\lfloor n^2/8\rfloor$. 

We prove this conjecture for $t=2$ and disprove it for $t>2$ by proving the sharp bound $\lfloor n^2/8\rfloor+(t-1)^2$. We also consider larger uniformity and determine the largest number of Berge $B_t$-free $r$-uniform hypergraphs besides an additive term $o(n^2)$. We obtain a similar bound if the Berge $t$-fan ($t$ triangles sharing a vertex) is forbidden.
\end{abstract}

\section{Introduction}

Given a graph $G$ and a hypergraph $\cH$, we say that $\cH$ is a Berge copy of $G$ (in short: a Berge-$G$) if $V(G)\subset V(\cH)$ and there is a bijection $f:E(G)\subset E(\cH)$ such that for each edge $e\in E(G)$ we have $e\subset f(e)$. This notion was introduced by Gerbner and Palmer \cite{gp1}, generalizing the well established notion of hypergraph cycles and paths due to Berge.

Extremal problems concerning Berge copies have attracted a lot of researchers. The most studied question is the largest number of edges in $r$-uniform $n$-vertex Berge-$F$-free hypergraphs, which we denote by $\ex_r(n,\textup{Berge-}F)$ (note that for $r=2$ this is the same as the ordinary Tur\'an number $\ex(n,F)$).
One of the earliest results on extremal problems for Berge hypergraphs is due to Gy\H ori \cite{gyori}.

\begin{thm}[Gy\H ori]\label{triangle} If $n\ge 100$, then
$\ex_3(n,\textup{Berge-}K_3)=\lfloor n^2/8\rfloor$.
\end{thm}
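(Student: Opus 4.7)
The plan is to prove the matching upper and lower bounds separately. For the upper bound I would use a charging scheme to reduce the problem to Mantel's theorem on triangle-free graphs; for the lower bound I would give an explicit bipartite-plus-matching construction.

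For the upper bound, let $\cH$ be a Berge-$K_3$-free $3$-uniform hypergraph on $n$ vertices. Call a pair $\{u,v\}\subset h$ an \emph{exclusive pair} of $h\in\cH$ if $h$ is the only hyperedge of $\cH$ containing $\{u,v\}$. The key observation is that every hyperedge has at least two exclusive pairs. Indeed, if some $h=\{x,y,z\}$ had two non-exclusive pairs, say $\{x,y\}\subset h'\ne h$ and $\{y,z\}\subset h''\ne h$, then either $h'=h''\supset\{x,y,z\}=h$, a contradiction, or $h'\ne h''$, in which case $h,h',h''$ are three distinct hyperedges covering the pairs $\{x,z\},\{x,y\},\{y,z\}$ respectively, yielding a Berge-$K_3$. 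Now for each hyperedge pick two of its exclusive pairs and let $G$ be the simple graph on $V(\cH)$ consisting of all chosen pairs; by exclusivity, different hyperedges contribute different edges, so $|E(G)|=2|E(\cH)|$. To finish I would show $G$ is triangle-free: if $\{x,y,z\}$ formed a triangle in $G$ with $\{x,y\},\{y,z\},\{x,z\}$ exclusive to hyperedges $h_1,h_2,h_3$, and if $h_1=h_2$, then $h_1\supset\{x,y,z\}$ forces $h_1=\{x,y,z\}$, which contains $\{x,z\}$ too, forcing $h_3=h_1$ by exclusivity of $\{x,z\}$; but then all three pairs of $\{x,y,z\}$ are exclusive to this single hyperedge, contradicting that we chose only two of them to put in $G$. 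Otherwise $h_1,h_2,h_3$ are pairwise distinct and witness a Berge-$K_3$. Mantel's theorem then gives $|E(\cH)|=|E(G)|/2\le\lfloor n^2/4\rfloor/2$, which, since $|E(\cH)|$ is an integer, yields $|E(\cH)|\le\lfloor n^2/8\rfloor$.

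For the lower bound, when $4\mid n$ I partition $V=A\sqcup B$ with $|A|=|B|=n/2$, fix a perfect matching $M$ on $B$, and take the $(n/2)(n/4)=n^2/8$ hyperedges $\{a,b,b'\}$ with $a\in A$ and $\{b,b'\}\in M$. The shadow $2$-graph is $K_{A,B}\cup M$; any triangle in this shadow must have one vertex $x$ in $A$ and two vertices $y,z\in B$ with $\{y,z\}\in M$, because $A$ is independent in the shadow and $M$ is a matching so no three $B$-vertices form a triangle. For such a triple the pairs $\{x,y\}$ and $\{x,z\}$ are each contained only in the single hyperedge $\{x,y,z\}$ (since $y$ is matched only with $z$), so three distinct hyperedges cannot cover the three triangle pairs. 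For the remaining residues of $n\pmod 4$ one adjusts the construction with a few extra hyperedges on leftover vertices; this minor bookkeeping is where the hypothesis $n\ge 100$ is absorbed, and is the only technical nuisance. The conceptual heart of the proof is the two-exclusive-pairs lemma combined with Mantel's theorem, and spotting that lemma is the main obstacle one has to overcome.
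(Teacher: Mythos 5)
Your argument is correct, and in fact the paper does not prove this theorem at all: it is Gy\H{o}ri's result, quoted with a citation, so there is no in-paper proof to compare against. That said, your route is essentially the standard one, and it is also the degenerate case of the machinery this paper builds for its own theorems: your ``exclusive pairs'' are exactly the paper's light edges, your observation that each hyperedge carries at least two of them is the $r=3$, $k=1$ instance of the clique structure of heavy edges used in Proposition 3.1 and in the red/blue colouring for Theorem 1.3 (where two red light edges are placed in each hyperedge of $\cH_3$ and the count is closed via $|E(G_r)|/2+\cN(K_3,G_b)\le n^2/8$), and your construction is the paper's Construction 1 up to relabelling (your matched pairs in $B$ are its ``twins''). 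Two small points. First, for $n\not\equiv 0\pmod 4$ you should not add ``extra hyperedges on leftover vertices'' --- that risks creating a Berge triangle and is unnecessary; the right fix is simply to choose the part sizes so that $|A|\cdot\lfloor|B|/2\rfloor=\lfloor n^2/8\rfloor$, which is always possible (this is precisely the careful distribution of the $n-4\lfloor n/4\rfloor$ leftover elements in Construction 1). Second, your claim that the hypothesis $n\ge 100$ is ``absorbed'' in this bookkeeping is not right: as written, your upper bound (two exclusive pairs per hyperedge, the exclusive-pair graph is triangle-free, Mantel, and an integrality step turning $\lfloor n^2/4\rfloor/2$ into $\lfloor n^2/8\rfloor$) uses no lower bound on $n$ whatsoever, and neither does the construction; so either state the result without the hypothesis or identify where your argument genuinely needs it --- it does not.
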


This was very recently extended to an asymptotic bound for book graphs by Ghosh, Győri, Nagy-György, Paulos, Xiao and Zamora \cite{ggnpxz}. The book graph $B_t$ consists of $t$ triangles sharing an edge, i.e. has vertices $u,v,w_1,\dots,w_t$ and edges $uv$, $uw_i$ and $vw_i$ for $i\le t$. We say that $u$ and $v$ are the rootlet vertices, $uv$ is the rootlet edge and $w_i$ are the page vertices of the book.

\begin{thm}[Ghosh, Győri, Nagy-György, Paulos, Xiao and Zamora \cite{ggnpxz}]\label{etal}

$\ex_3(n,\textup{Berge-}B_t)=\lfloor n^2/8\rfloor+o(n^2)$.
\end{thm}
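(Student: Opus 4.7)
The lower bound $\ex_3(n,\textup{Berge-}B_t)\geq\lfloor n^2/8\rfloor$ is immediate from Gy\H ori's extremal construction for Theorem~\ref{triangle}, since any Berge-$K_3$-free hypergraph is \emph{a fortiori} Berge-$B_t$-free. So the entire content lies in the matching upper bound, and my plan is a supersaturation-plus-extraction argument built on Theorem~\ref{triangle}.

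Suppose, for contradiction, that $\cH$ is a Berge-$B_t$-free $3$-uniform $n$-vertex hypergraph with $|E(\cH)|\geq n^2/8+\epsilon n^2$. First I would establish quantitative supersaturation: $\cH$ contains $\Omega_\epsilon(n^3)$ Berge copies of $K_3$. The natural route is via the graph removal lemma applied to a suitable $2$-shadow of $\cH$. Each Berge triangle of $\cH$ projects to a triangle of the shadow $\partial\cH$, and conversely a triangle of $\partial\cH$ lifts to a Berge triangle provided its three pairs are covered by three distinct hyperedges, which can be arranged for all but $o(n^2)$ candidate triangles. Were the number of Berge triangles only $o(n^3)$, the removal lemma would supply a set of $o(n^2)$ hyperedges meeting every Berge triangle; removing them would leave a Berge-$K_3$-free hypergraph with more than $n^2/8+\tfrac12\epsilon n^2$ hyperedges, contradicting Theorem~\ref{triangle}.

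Second, I would localise to a single rootlet pair. With $\Omega_\epsilon(n^3)$ Berge triangles in hand, averaging over the $\binom{n}{2}$ choices of rootlet pair yields a pair $\{u,v\}$ that is the rootlet of $\Omega_\epsilon(n)$ distinct Berge triangles: there are $\Omega_\epsilon(n)$ vertices $w$ together with triples of pairwise distinct covering hyperedges certifying a Berge-$K_3$ on $\{u,v,w\}$. Finally, I would extract the book greedily. Fix any hyperedge $E_0\supseteq\{u,v\}$ and, for $i=1,\dots,t$, pick an unused apex $w_i$ together with covering hyperedges $E_{uw_i}\supseteq\{u,w_i\}$ and $E_{vw_i}\supseteq\{v,w_i\}$ disjoint from the $2i-1$ hyperedges already chosen. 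Because each triple of $\cH$ serves as a covering hyperedge of at most two pairs through any fixed vertex, only a constant number of apex candidates becomes blocked at each step, so for fixed $t$ and $n$ large enough the procedure completes and produces a Berge-$B_t$, contradicting the assumption on $\cH$.

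The main obstacle is the quantitative supersaturation step, and specifically the ``three distinct hyperedges'' requirement that separates Berge triangles from mere triangles in $\partial\cH$. A careful choice of shadow -- for instance, orienting each hyperedge and charging it to a canonical pair, or first deleting the $o(n^2)$ hyperedges whose presence causes coincidences of covering hyperedges -- should absorb this issue into the $o(n^2)$ error term, but spelling it out is the delicate part. The $o(n^2)$ in the final bound is precisely the loss inherited from the graph removal lemma; this also explains why an exact $\lfloor n^2/8\rfloor$ estimate is out of reach by this route, motivating the conjecture resolved in the present paper.
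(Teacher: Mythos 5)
Your lower bound and the last two steps (averaging to find a pair $\{u,v\}$ that is the rootlet of $\Omega_\varepsilon(n)$ Berge-triangle cores, then greedily extracting a Berge-$B_t$ — essentially Proposition \ref{largebook}\textbf{(i)}) are fine. The fatal problem is the supersaturation step, and it is not a technicality that can be "absorbed into the error term": the implication you rely on — that a $3$-uniform hypergraph with only $o(n^3)$ Berge triangles can be made Berge-$K_3$-free by deleting $o(n^2)$ hyperedges — is false. The removal lemma hands you $o(n^2)$ \emph{edges of the shadow graph} hitting every triangle; a single shadow edge can lie in up to $n-2$ hyperedges, so this does not convert into a small set of \emph{hyperedges} hitting every Berge triangle. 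Concretely, take Construction 1 and add, on a set $K$ of $\Theta(n^{2/3})$ vertices inside $A_3$, all $\binom{|K|}{3}\approx\varepsilon n^2$ triples. The result has $n^2/8+\varepsilon n^2$ hyperedges, yet every Berge-triangle core has at least two vertices in $K$ (the original shadow is triangle-free apart from non-core triangles), so there are only $O(|K|^2n)=O(n^{7/3})=o(n^3)$ Berge triangles; and it cannot be made Berge-$K_3$-free by deleting $o(n^2)$ hyperedges, since by Theorem \ref{triangle} a Berge-$K_3$-free hypergraph on $|K|$ vertices has only $O(n^{4/3})$ hyperedges. So your dichotomy collapses exactly where the content is: under the Berge-$B_t$-free hypothesis the "$\Omega_\varepsilon(n^3)$ Berge triangles" branch is the easy one (indeed each shadow edge lies in $O(t)$ core triangles, so there are never more than $O(n^2)$ of them), and the entire theorem sits in the branch your removal-lemma argument does not close.

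Note that the statement you are proving is quoted from \cite{ggnpxz}; this paper does not reprove it but derives it (and more) as the $r=3$ case of Theorem \ref{runi}. The working argument replaces shadow-level removal by a classification of the hyperedges themselves via the heavy/light structure of their sub-pairs: a hyperedge containing a triangle with two heavy edges yields a core of a Berge triangle, and these cores live in a $B_{O(rt)}$-free graph, so by Proposition \ref{largebook} and Proposition \ref{alsh} there are only $o(n^2)$ such hyperedges; every remaining hyperedge contains two light sub-pairs, and since a light pair lies in exactly one hyperedge this maps the remaining hyperedges two-to-one into $E(G)$, which has at most $n^2/4+o(n^2)$ edges by Theorem \ref{luwa}. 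That two-to-one injection into a Mantel-type bound is where the constant $1/8$ actually comes from, and it is the ingredient your proposal is missing.
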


They conjectured that the $o(n^2)$ term can be omitted for $n$ large enough. We prove their conjecture for $t=2$ and disprove it for larger $t$ by the following theorem, which determines the exact value of $\ex_3(n,\textup{Berge-}B_t)$ for $n$ large enough.

\begin{thm}\label{3uni}  If $n$ is large enough, then we have

\textbf{(i)} $\ex_3(n,\textup{Berge-}B_2)=\lfloor n^2/8\rfloor$.

\textbf{(ii)}
$\ex_3(n,\textup{Berge-}B_t)=\lfloor n^2/8\rfloor+(t-1)^2$. 
\end{thm}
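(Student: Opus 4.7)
The lower bound for (i) is immediate: Gy\H ori's extremal construction from Theorem \ref{triangle} is Berge-$K_3$-free, hence Berge-$B_2$-free, giving $\lfloor n^2/8\rfloor$ hyperedges. For (ii), I would exhibit a Berge-$B_t$-free hypergraph of size $\lfloor n^2/8\rfloor + (t-1)^2$ by starting from Gy\H ori's construction and adding a block of $(t-1)^2$ hyperedges on a constant number of distinguished vertices, arranged so that every pair of vertices is the rootlet of at most $t-1$ Berge triangles in the combined hypergraph. A natural candidate structure is a $(t-1)\times(t-1)$ ``grid'' of extras, mimicking the role played by $K_{t-1,t-1}$ in Zarankiewicz-type problems, verified to be Berge-$B_t$-free by a direct case check.

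For the upper bound, let $\cH$ be a Berge-$B_t$-free $3$-uniform hypergraph on $n$ vertices. First, I would invoke a stability refinement of Theorem \ref{etal}: if $|\cH|\geq \lfloor n^2/8\rfloor - o(n^2)$ (which it does, else we are done), then there is a bipartition $V=A\cup B$ with $|A|,|B|\sim n/2$ and a sub-hypergraph $\cH_0\subseteq \cH$ of size $\lfloor n^2/8\rfloor - o(n^2)$ that is Gy\H ori-like: every hyperedge of $\cH_0$ consists of two $A$-vertices forming a near-perfect matching $M$ of $A$ together with one $B$-vertex.

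Next I would analyze the set $R=\cH\setminus \cH_0$ of ``extras.'' Every extra hyperedge $h\in R$, combined with hyperedges of $\cH_0$, produces a family of Berge triangles whose rootlet pairs can be read off from the interaction of $h$ with the bipartition and matching $M$. By a careful case analysis based on whether $h$ has $0,1,2,3$ vertices in $A$ and the positions of those vertices relative to $M$, I would show that each extra forces Berge triangles sharing common rootlet pairs. Aggregating across all extras, I would encode the forced rootlet pairs in an auxiliary bipartite graph and apply a K\H ov\'ari--S\'os--Tur\'an-style pigeonhole: if $|R|>(t-1)^2$, then some rootlet pair admits $t$ distinct pages, producing a Berge-$B_t$ in $\cH$, a contradiction. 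Hence $|R|\leq(t-1)^2$ and $|\cH|\leq \lfloor n^2/8\rfloor + (t-1)^2$. For the sharper statement in (i), I would further show that for $t=2$ \emph{any} single extra already forces two Berge triangles on a common rootlet (by the rigidity of Gy\H ori's structure), giving an immediate Berge-$B_2$; hence $R=\emptyset$ and $|\cH|\leq \lfloor n^2/8\rfloor$.

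The main obstacle will be the final pigeonhole step, where the precise constant $(t-1)^2$ (as opposed to a larger $O_t(1)$) must be squeezed out. This requires understanding exactly which ``extra'' configurations are compatible with Berge-$B_t$-freeness, and matching them against the lower bound construction; I expect to follow the paradigm where the extremal configuration of extras is structurally a $K_{t-1,t-1}$-like block.
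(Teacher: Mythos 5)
Your high-level plan (stability, then charge the ``extras'' to an auxiliary bipartite structure) points in the right direction, but both halves have concrete gaps. For the lower bound in \textbf{(ii)}, adding a $(t-1)\times(t-1)$ block of extra hyperedges to Gy\H ori's construction \emph{unchanged} does not work: if you add $\{a,c_j,d_j\}$ where $\{c_j,d_j\}$ is a twin pair on the right and $a$ is a left vertex, then $\{c_j,d_j\}$ becomes a rootlet pair with linearly many page vertices, since every left twin pair $\{a_i,b_i\}$ still sends two distinct hyperedges to $c_j$ and $d_j$, and the new hyperedge can represent the rootlet edge $c_jd_j$; this is already a Berge-$B_t$ for large $n$. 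The paper must first rewire the base construction (its Construction~2) so that all but $t-1$ of the left twin pairs are joined to $\{c_j,d_j\}$ by a \emph{single} hyperedge each, and only then can the $(t-1)^2$ extras be added. So the ``direct case check'' you defer to would fail on your candidate.

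For the upper bound, two steps do not go through as stated. First, stability does not hand you a sub-hypergraph $\cH_0$ of size $\lfloor n^2/8\rfloor-o(n^2)$ in which every hyperedge is literally a matched pair of $A$ plus a $B$-vertex; stability constrains the shadow graph, not the internal structure of hyperedges, and extracting anything like this is a large part of the actual work. Second, and more seriously, the set of ``extras'' $R$ is \emph{not} bounded by a constant, nor do the extras concentrate on few rootlet pairs: $|R|$ can be as large as $o(n^2)$, compensated by missing cross edges between $A$ and $B$. The correct accounting is a global trade-off (the paper counts each ``degenerate'' hyperedge as two red edges, each other hyperedge as a blue triangle, and shows $|E(G_r)|/2+\cN(K_3,G_b)\le n^2/8+(t-1)^2$ by charging every blue triangle and every surplus red edge against missing cross edges). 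The residual $(t-1)^2$ does not come from a K\H ov\'ari--S\'os--Tur\'an pigeonhole on rootlet pairs, but from a specific residual configuration: $4$-sets $\{u,v,u',v'\}$ spanned by a matching edge inside $A$ and one inside $B$ whose endpoints have many common neighbours. For these one proves a degree bound $d(uv)<t$ in an auxiliary bipartite graph on such matching edges and then optimises $(m_9+m_9')(t-1)-m_9m_9'\le (t-1)^2$, i.e.\ the gain of at most $t-1$ profitable $4$-sets per matching edge is offset by the $m_9m_9'$ missing cross edges among all pairs of such matching edges. Without this quadratic offset your pigeonhole would at best give a bound growing with the number of rootlet pairs involved, not the constant $(t-1)^2$; and the $t=2$ case likewise requires a separate analysis of the $4$-sets carrying $2$, $3$ or $4$ hyperedges rather than the blanket claim that any single extra forces a Berge-$B_2$.
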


We also consider larger uniformity. Gerbner \cite{gerb} proved that if $r\ge 2t+3$, then $\ex_r(n,\textup{Berge-}B_t)=o(n^2)$. For other values of $r$, we determine $\ex_r(n,\textup{Berge-}B_t)$ asymptotically.

\begin{thm}\label{runi} Let $k=\min \{r-1,t+1\}$. If $ r\le 2t+2$, then
$\ex_r(n,\textup{Berge-}B_t)=n^2/4k(r-k)+o(n^2)$. 
\end{thm}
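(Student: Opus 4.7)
The theorem asks for matching asymptotic upper and lower bounds of $n^2/(4k(r-k))+o(n^2)$.

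For the \textbf{lower bound}, I would partition $V$ into two sets $A, B$ of sizes $\lfloor n/2 \rfloor$ and $\lceil n/2 \rceil$ and take as hyperedges sets of the form $S \cup T$ with $S \subseteq A$, $|S| = k$, $T \subseteq B$, $|T| = r - k$, drawn from a packing in which every cross pair $\{a, b\} \in A \times B$ lies in at most one hyperedge. Such a family corresponds to a packing of $K_{|A|,|B|}$ by copies of $K_{k,r-k}$, and classical Wilson-type theorems yield one of size $(1 - o(1))|A||B|/(k(r-k)) \sim n^2/(4k(r-k))$. Berge-$B_t$-freeness would follow because any Berge-$B_t$ inside this family forces two distinct hyperedges to share a cross pair (the rootlet pair together with a page on the opposite side), contradicting the packing property; the boundary case $r\le t+2$ (where $k=r-1$, $r-k=1$) is handled analogously.

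For the \textbf{upper bound}, let $\cH$ be Berge-$B_t$-free and $r$-uniform on $n$ vertices. The plan has three steps. (i) Find a bipartition $V = A \cup B$ with $|A| \approx n/2$ such that all but $o(n^2)$ hyperedges satisfy the ``typical'' profile $|h \cap A| = k$, $|h \cap B| = r-k$; call these hyperedges $\cH'$. This would come from a stability/averaging step: the value $k=\min\{r-1,t+1\}$ is precisely the profile that maximises $|h\cap A|\cdot|h\cap B|$ among profiles not already creating a forced Berge-$B_t$. (ii) Show that at most $o(n^2)$ cross pairs $\{a, b\} \in A \times B$ have codegree at least $2$ in $\cH'$. (iii) Double-count:
\begin{equation*}
k(r-k)|\cH'| = \sum_{h \in \cH'} |h \cap A| \cdot |h \cap B| \le |A||B| + o(n^2) \le \tfrac{n^2}{4} + o(n^2),
\end{equation*}
yielding $|E(\cH)| \le |\cH'|+o(n^2) \le n^2/(4k(r-k)) + o(n^2)$.

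The hard step is (ii). If $\Omega(n^2)$ cross pairs $\{u, v\}$ had codegree at least $2$ in $\cH'$, I would build a Berge-$B_t$ greedily by taking such a pair as rootlet and selecting $t$ page vertices $w_1, \dots, w_t$ in turn, each time finding distinct unused hyperedges containing $\{u, w_i\}$ and $\{v, w_i\}$; the large codegree provides enough rootlet-witnesses, while typical-profile hyperedges provide the page-witnesses. The hypothesis $r \le 2t + 2$ is essential: it limits the ``slack'' per hyperedge and prevents a single hyperedge from serving simultaneously as pair-witness for too many pages, so the greedy procedure goes through. For $r \ge 2t + 3$ the greedy fails and, consistent with Gerbner's earlier result, the extremal number drops to $o(n^2)$.
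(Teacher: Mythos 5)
Your lower-bound construction does not work. Requiring only that every cross pair lie in at most one hyperedge does not prevent Berge copies of $B_t$: in a near-perfect $K_{k,r-k}$-packing of $K_{|A|,|B|}$ each vertex of $A$ lies in about $|B|/(r-k)$ copies, and (since $k\ge 2$) many pairs $u,v\in A$ lie together in the $A$-side of some copy. Take such a pair as rootlets. For almost every $w\in B$ the unique hyperedge containing $\{u,w\}$ is \emph{different} from the unique hyperedge containing $\{v,w\}$, so $w$ is a page vertex with two distinct witnesses; a greedy choice of $t$ such $w$'s, plus one hyperedge through $\{u,v\}$, yields a Berge-$B_t$ in which every cross pair is used only once. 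Your justification overlooks exactly this case (rootlet pair inside one part, pages certified by unaligned hyperedges). What makes Construction~4 of the paper work is not the packing property but the rigid block structure: the traces of the hyperedges on each side are \emph{pairwise disjoint} blocks of sizes $k$ and $r-k$, so for $u,v$ in one block every $w$ on the other side sees $u$ and $v$ through the \emph{same} single hyperedge, and the remaining candidates for pages all lie in the block, numbering at most $k-2\le t-1$ (this is also where $r\le 2t+2$ enters, to guarantee $r-k\le t+1$).

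On the upper bound, steps (i) and (ii) are the entire difficulty and you give no argument for either. There is no a priori reason an arbitrary Berge-$B_t$-free $\cH$ admits a bipartition in which almost all hyperedges split $k/(r-k)$; moreover, to have many hyperedges one wants to \emph{minimize} $|h\cap A|\cdot|h\cap B|$, so your heuristic does not even explain why the profile $(1,r-1)$ (which would give the larger bound $n^2/4(r-1)$) is excluded when $k<r-1$. Step (ii) is also shaky as stated: even granting only $o(n^2)$ cross pairs of codegree at least $2$, those pairs can have codegree up to $n$, so the inequality in (iii) needs more than their number to be small; and a graph of $2$-heavy pairs need not be sparse in general (only the $|E(B_t)|$-heavy graph is forced to be $B_t$-free, which bounds it by $n^2/4$, not $o(n^2)$). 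The paper avoids any global bipartition: it classifies pairs \emph{inside each hyperedge} as heavy or light by codegree, shows via Proposition~\ref{trivi} that the heavy pairs inside a hyperedge form vertex-disjoint cliques of order at most $k=\min\{r-1,t+1\}$, so a typical hyperedge contains at least $k(r-k)$ light pairs each counted a bounded number of times; the exceptional hyperedges number $o(n^2)$ because the associated graphs are (almost) $B_{3rt}$-free or $B_t$-free and $\ex(n,K_3,B_t)=o(n^2)$, $\ex(n,K_r,B_t)=o(n^2)$; and the total number of light pairs is at most $n^2/4+o(n^2)$ because the shadow is $K_{1,r-1,5r^2t}$-free (or by the cover Tur\'an theorem). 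That per-hyperedge heavy/light analysis is the missing idea that replaces your steps (i) and (ii).
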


Our methods also work for the $t$-fan $F_t$, which consists of $t$ triangles sharing a vertex.  Gerbner \cite{gerb} proved that if $r\ge 4t+2$, then $\ex_r(n,\textup{Berge-}F_t)=o(n^2)$.

\begin{thm}\label{kfan} Let $k=\min \{r-1,2t\}$. If $ r\le 4t+1$, then
$\ex_r(n,\textup{Berge-}F_t)=n^2/4k(r-k)+o(n^2)$. 
\end{thm}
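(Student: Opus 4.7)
I would prove matching lower and upper bounds, using the strategy for Theorem~\ref{runi}.

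For the lower bound, partition $V=[n]$ into $A\sqcup B$ with $|A|=|B|\approx n/2$, then subpartition $A$ into blocks of size $k$ and $B$ into blocks of size $r-k$, and take $\cH_0=\{A_i\cup B_j\}$. This yields $\approx n^2/(4k(r-k))$ hyperedges, with each $A$-$B$ cross-pair in exactly one hyperedge. To check Berge-$F_t$-freeness, suppose a Berge copy has center $c\in A_i$. Hyperedges through $c$ all have $A$-part $A_i$, so $A$-side outer vertices come from the $k-1$ partners of $c$ in $A_i$, and $B$-side outer vertices must lie in pairwise distinct $B$-blocks. A case analysis on each triangle-pair $(a_s,b_s)$ rules out all but the ``both in $A_i$'' type, forcing $2t\le k-1$, which contradicts $k\le 2t$. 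The symmetric case $c\in B$ is handled the same way under $r-k\le 2t$; at $r=4t+1$ one removes $o(n^2)$ hyperedges to kill the residual copies centered on the larger side.

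For the upper bound, let $\cH$ be $r$-uniform Berge-$F_t$-free with $m$ edges, and define the codegree graph $G=\{\{u,v\}:d_\cH(u,v)\ge 3t\}$. A greedy embedding shows $G$ is $F_t$-free: any $F_t$-copy in $G$ lifts to a Berge-$F_t$ in $\cH$ by picking distinct hyperedges edge-by-edge (at most $3t-1$ are blocked at any step, codegree $\ge 3t$ always leaves a choice). By Erd\H os--Stone--Simonovits, $|E(G)|\le n^2/4+o(n^2)$, and by Erd\H os--Simonovits stability one finds a bipartition $V=A'\sqcup B'$ with $G$ close to $K_{A',B'}$.

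The core step, and main obstacle, is to show that after discarding an $o(m)$-sized exceptional set of hyperedges, each surviving $e\in\cH$ satisfies $|e\cap A'|\in\{k,r-k\}$ and its $k(r-k)$ cross-pairs across $A'\sqcup B'$ are essentially distinct across hyperedges. Given this, the cross-pair multigraph has $(1+o(1))m\cdot k(r-k)$ edges, lies inside $A'\times B'$, and has bounded multiplicity, yielding $m\le n^2/(4k(r-k))+o(n^2)$. The argument extracts Berge-$F_t$ from hyperedges that violate the split or produce repeated cross-pairs, by coupling them with the greedy machinery above. The hypothesis $r\le 4t+1$ enters here: for larger $r$, hyperedges have too much slack to force bipartite behaviour, and Gerbner's bound $\ex_r(n,\textup{Berge-}F_t)=o(n^2)$~\cite{gerb} applies instead.
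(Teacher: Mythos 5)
Your lower bound is the paper's Construction 4, but the patch you propose at $r=4t+1$ does not work. In that case the blocks on one side have $r-k=2t+1$ vertices, and any such block $C$ already spans a Berge-$F_t$: take any $u\in C$ as the centre and pair up the remaining $2t$ vertices of $C$ into $t$ triangles; all $3t$ edges of this fan lie inside $C$, every one of the $\Theta(n)$ hyperedges containing $C$ covers all of them, so $3t$ distinct representing hyperedges exist trivially. Destroying all such copies forces at most $3t-1$ hyperedges through each block, which deletes almost the entire construction, not $o(n^2)$ of it. (This boundary case is genuinely problematic --- for $t=1$, $r=5$ the claimed value $n^2/24$ contradicts the $o(n^2)$ bound for $r>(\chi(F)-1)(|V(F)|-1)=4$ quoted in the paper's concluding remarks --- so the range in which your construction, and the paper's, is valid is really $r\le 4t$.)

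The serious gap is in the upper bound: the step you yourself call the main obstacle is where all the work lives, and the tools you set up cannot deliver it. Pairs of codegree between $2$ and $3t-1$ are invisible in your codegree graph $G$, so neither Erd\H os--Stone--Simonovits nor stability controls them; yet a cross-pair of codegree $2$ already violates ``essentially distinct cross-pairs'', and if a positive fraction of cross-pairs had codegree $2$ you would lose a factor of $2$ in the final count. Moreover, the claim that surviving hyperedges split as $k$ versus $r-k$ across the stability bipartition is both hard to prove and unnecessary. What is actually needed --- and what the paper proves --- is that all but $o(n^2)$ hyperedges contain at least $k(r-k)$ pairs of codegree exactly one; this follows from showing that the $2$-heavy pairs inside a generic hyperedge form vertex-disjoint cliques of order at most $k$ (Proposition \ref{fre}), after discarding three exceptional classes: hyperedges containing a triangle of three codegree-$2$ edges (handled via the triangle removal lemma, Proposition \ref{remo}); hyperedges containing a heavy ``cherry'' with a light third edge (handled by showing the associated graph is $F_{3rt}$-free via Proposition \ref{largebook}, hence has $O(n)$ triangles); and hyperedges all of whose pairs are $3t$-heavy (handled by $\ex(n,K_r,F_t)=O(n)$, Proposition \ref{genturfan}, itself a nontrivial induction). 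Once each remaining hyperedge owns $k(r-k)$ codegree-one pairs, summing them against the Lu--Wang bound $n^2/4+o(n^2)$ on the shadow graph (Theorem \ref{luwa}) finishes the proof; no stability and no bipartition are needed. Your sketch contains none of this machinery, and ``coupling with the greedy machinery above'' cannot replace it, because the greedy lifting only sees $3t$-heavy pairs.
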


The rest of this paper is organized as follows. In Section 2, we describe the tools we need, in particular the well-known connection to generalized Tur\'an problems. Section 3 contains the proofs of Theorems \ref{3uni}, \ref{runi} and \ref{kfan}. We finish the paper with some concluding remarks in Section 4.

\section{Preliminaries}

We use some fundamental results of graph theory. The Erd\H os-Stone-Simonovits theorem \cite{ES1966,ES1946} states that for any graph $F$ with chromatic number $p$ we have $\ex(n,F)=(1-\frac{1}{p-1})\binom{n}{2}+o(n^2)$. The lower bound is obtained by the Tur\'an graph $T(n,p-1)$, which is the complete $(p-1)$-partite graph with each part of order $\lfloor n/(p-1)\rfloor$ or $\lceil n/(p-1)\rceil$. A theorem of Simonovits \cite{sim2} states that if $F$ has an edge such that deleting that edge results in a graph with chromatic number $p-1$ and $n$ is large enough, then $\ex(n,F)=|E(T(n,p-1))|$, in particular $\ex(n,B_t)=\lfloor n^2/4\rfloor$. The Erd\H os-Simonovits stability theorem \cite{erd1,erd2,sim} states that for any $\varepsilon>0$ there exists a $\delta>0$ such that if an $F$-free $n$-vertex graph $G$ has at least $|E(T(n,p-1)|-\delta n^2$ edges, then $G$ can be obtained from $T(n,p)$ by adding and deleting at most $\varepsilon n^2$ edges. The removal lemma \cite{efr} states that if an $n$-vertex graph has $o(n^{|V(H)|})$ edges, then it can be made $H$-free by deleting $o(n^2)$ edges. 

\smallskip

Given graphs $H$, $G$ and $F$, we denote by $\cN(H,G)$ the number of copies of $H$ in $G$. The generalized Tur\'an number $\ex(n,H,F):=\max \{\cN(H,G): \text{ $G$ is an $n$-vertex $F$-free graph}\}$. After several sporadic result, the systematic study of this quantity was initiated by Alon and Shikhelman \cite{as}.

The connection of generalized Tur\'an numbers and Berge hypergraphs was established by Gerbner and Palmer \cite{gp2} in the following statement.

\begin{lemma}\label{celeb} For any integers $n$ and $r$ and any graph $F$, we have
$\ex(n,K_r,F)\le \ex_r(n,\textup{Berge-}F)\le \ex(n,K_r,F)+\ex(n,F)$.
\end{lemma}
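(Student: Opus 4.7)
The plan is to prove the two inequalities separately.

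For the lower bound $\ex(n, K_r, F) \le \ex_r(n, \textup{Berge-}F)$, I would take an $n$-vertex $F$-free graph $G$ achieving $\cN(K_r, G) = \ex(n, K_r, F)$ and form the $r$-uniform hypergraph $\cH$ on $V(G)$ whose hyperedges are precisely the vertex sets of the $r$-cliques of $G$. Then $|E(\cH)| = \ex(n, K_r, F)$, and $\cH$ is Berge-$F$-free: a Berge embedding $f : E(F) \to E(\cH)$ would force every edge $uv \in E(F)$ to lie inside some hyperedge $f(uv)$, hence inside the vertex set of an $r$-clique of $G$, so $uv \in E(G)$ as well; this realizes $F$ as a subgraph of $G$, contradicting $F$-freeness.

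For the upper bound, let $\cH$ be a Berge-$F$-free $r$-uniform $n$-vertex hypergraph with $|E(\cH)| = \ex_r(n, \textup{Berge-}F)$. I would consider pairs $(G, \phi)$ where $G$ is a graph on $V(\cH)$ and $\phi : E(G) \to E(\cH)$ is an injection satisfying $e \subseteq \phi(e)$ for every $e \in E(G)$ (the empty $G$ with empty $\phi$ trivially works), and take such a pair maximizing $|E(G)|$. The key structural observation is that each hyperedge $h \in E(\cH) \setminus \phi(E(G))$ must span an $r$-clique in $G$: otherwise some pair $\{u,v\} \subseteq h$ is missing from $E(G)$, and then adjoining $\{u,v\}$ to $E(G)$ and extending $\phi$ by $\{u,v\} \mapsto h$ produces a strictly larger valid pair, contradicting maximality. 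Consequently
\[
|E(\cH)| = |\phi(E(G))| + |E(\cH) \setminus \phi(E(G))| \le |E(G)| + \cN(K_r, G).
\]
Moreover $G$ itself must be $F$-free, because restricting $\phi$ to the edges of any copy of $F$ in $G$ would exhibit a Berge-$F$ in $\cH$. Combining $|E(G)| \le \ex(n, F)$ with $\cN(K_r, G) \le \ex(n, K_r, F)$ then yields the claimed upper bound.

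The only conceptually delicate point is the choice of the extremal object in the upper bound: maximizing $|E(G)|$ across all valid pairs $(G, \phi)$ simultaneously forces the unused hyperedges to be $r$-cliques (via the extension argument) and preserves $F$-freeness of $G$ (via pushing copies of $F$ through $\phi$), which is exactly what is needed for both error terms to appear on the right. Apart from this dichotomy, there are no technical calculations; the proof is a clean matching/extension argument, and I do not anticipate any serious obstacle.
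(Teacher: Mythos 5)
Your proof is correct and is essentially the standard argument for this lemma (which the paper only cites from Gerbner--Palmer rather than proving): a maximal injective representation $\phi$ forces every unassigned hyperedge to span a $K_r$ in the $F$-free auxiliary graph $G$, and the count $|E(\cH)|\le |E(G)|+\cN(K_r,G)$ follows since distinct hyperedges give distinct cliques. The lower-bound construction via the $r$-cliques of an extremal $F$-free graph is likewise the standard one; no gaps.
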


An improvement was obtained in \cite{fkl} and \cite{gmp} independently.

\begin{lemma}\label{celeb2} For any integers $n$ and $r$ and any graph $F$, there is an $F$-free $n$-vertex graph $G$ such that each of its edges is colored either blue or red and $\ex_r(n,\textup{Berge-}F)$ is at most the number of blue copies of $K_r$ plus the number of the red edges in $G$.
\end{lemma}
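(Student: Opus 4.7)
The plan is as follows. Fix an $n$-vertex $r$-uniform Berge-$F$-free hypergraph $\cH$ with $|E(\cH)| = \ex_r(n, \textup{Berge-}F)$, and try to exhibit a partition $E(\cH) = \cH_1 \sqcup \cH_2$ together with an injection $\phi : \cH_2 \to \binom{V(\cH)}{2}$ satisfying $\phi(h) \subset h$ for every $h \in \cH_2$, such that the graph
\[
G := \Bigl( \bigcup_{h \in \cH_1} \binom{h}{2} \Bigr) \cup \phi(\cH_2)
\]
is $F$-free. Colouring the pairs in $\bigcup_{h \in \cH_1} \binom{h}{2}$ blue and the remaining pairs of $\phi(\cH_2)$ red then produces the desired coloured graph: since $\cH$ is simple, distinct hyperedges of $\cH_1$ induce distinct blue copies of $K_r$, and the injectivity of $\phi$ delivers at least $|\cH_2|$ red edges, totalling at least $|\cH_1| + |\cH_2| = |E(\cH)|$.

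To build $(\cH_1, \cH_2, \phi)$, I would use an extremal/exchange argument. The family of valid triples is non-empty (take $\cH_1 = E(\cH)$, $\cH_2 = \emptyset$), so we may pick one minimising the number of copies of $F$ in the resulting $G$. If that minimum is zero, we are done. Otherwise, fix an $F$-copy in $G$ and for each of its edges $e$ set
\[
\mathrm{Cand}(e) := \{ h \in \cH_1 : e \subset h \} \cup \{ h \in \cH_2 : \phi(h) = e \}.
\]
Every element of $\mathrm{Cand}(e)$ is a hyperedge of $\cH$ containing $e$, so a system of distinct representatives for $\{\mathrm{Cand}(e)\}_e$ across the edges of the copy would realise this copy as a Berge-$F$ in $\cH$, contradicting the hypothesis. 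Hall's defect theorem thus supplies a subset $S$ of the edges of the $F$-copy with $\bigl|\bigcup_{e \in S} \mathrm{Cand}(e)\bigr| < |S|$. Letting $N := \bigcup_{e \in S} \mathrm{Cand}(e)$, the intended repair moves every $h \in N$ into $\cH_2^{\mathrm{new}}$ and redefines $\phi$ on $N$ via an injective assignment into $S$, leaving $\phi$ unchanged on $\cH_2 \setminus N$; because no hyperedge outside $N$ witnesses any edge of $S$ and $|N| < |S|$, at least one edge of $S$ vanishes from the new $G$ while no pair outside $S$ is added, strictly reducing the $F$-copy count and contradicting minimality.

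The main obstacle is that Hall's condition for the injective assignment $N \to S$ is not automatic: hyperedges in $N$ might cluster on a few edges of $S$ in a way that defeats Hall on a sub-family of $N$. I would resolve this by a recursive use of the same Hall-defect argument inside $N$ itself, iterating the repair on strictly smaller sub-families until the residual Hall condition holds; or, more cleanly, by reformulating the whole construction as a max-flow / König's-theorem problem on the bipartite graph between $E(\cH)$ and $\binom{V(\cH)}{2}$ in which each $h$ has capacity one on the left and each pair has capacity one on the right, so that the correct $(\cH_1, \cH_2, \phi)$ is produced in one shot from a suitable maximum matching together with its Berge-$F$-free complementary shadow.
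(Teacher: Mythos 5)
You should first note that the paper does not prove Lemma~\ref{celeb2} at all: it is quoted from \cite{fkl} and \cite{gmp}. The standard proof there is precisely the K\"onig-type argument you relegate to your final sentence: in the bipartite incidence graph between $E(\cH)$ and the shadow edges, take a maximum matching $M$ and a minimum vertex cover $C=C_H\cup C_E$; colour the pairs of $C_E$ blue and colour red the $M$-partners of the hyperedges in $C_H$ (these exist, are distinct, and lie outside $C_E$, because every vertex of a minimum cover is saturated by $M$ and every matching edge is covered exactly once). Every hyperedge outside $C_H$ has all its subedges in $C_E$, hence spans a blue $K_r$, and the resulting graph is $F$-free because each edge of a putative copy of $F$ has its own private hyperedge (its $M$-partner if the edge is blue, the associated $h\in C_H$ if it is red), which would produce a Berge-$F$. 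Since you only gesture at this, your proof stands or falls with the exchange argument, and that argument has two genuine gaps.

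The first you acknowledge yourself: Hall's condition for an injection $N\to S$ respecting containment can fail even though $|N|<|S|$ (two hyperedges of $N$ may meet $S$ only in one common pair while the other pairs of $S$ are witnessed by different members of $N$), and the proposed ``recursive Hall-defect repair inside $N$'' is not an argument --- the roles of the two sides are reversed there, the defect inequality points the wrong way, and no termination or correctness claim is given. The second gap is in the counting at the outset: you colour blue all pairs inside hyperedges of $\cH_1$ and red only the \emph{remaining} pairs of $\phi(\cH_2)$, so any $h\in\cH_2$ with $\phi(h)\subset h'$ for some $h'\in\cH_1$ contributes neither a red edge nor a blue $K_r$, and the claimed inequality can fail (already for $|\cH_1|=|\cH_2|=1$ with $|h\cap h'|\ge 2$). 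In the K\"onig proof this conflict cannot arise because the red edges avoid $C_E$ by construction; in your setup it would have to be imposed as an additional constraint on $\phi$ and then preserved through the exchange step. As written, the proposal is an incomplete sketch rather than a proof.
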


The \textit{shadow graph} $G$ of a hypergraph $\cH$ is the graph with the same vertex set such that $uv$ is an edge of $G$ if and only if there is a hyperedge of $\cH$ that contains both $u$ and $v$. If $\cH$ is $r$-uniform, then each hyperedge creates a $K_r$, thus $\cH$ has at most $\cN(K_r,G)$ hyperedges. Note that $G$ can contain $F$ even if $\cH$ is Berge-$F$-free.

We will use the following results on generalized Tur\'an numbers.

\begin{proposition}[Alon, Shikhelman \cite{as}]\label{alsh}
For any $t$ we have $\ex(n,K_3,B_t)=o(n^2)$.
\end{proposition}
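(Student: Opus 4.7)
The plan is to combine the trivial local bound on triangles per edge with the triangle removal lemma that was already cited in the Preliminaries. First I observe that if $G$ is an $n$-vertex $B_t$-free graph, then every edge $uv\in E(G)$ lies in at most $t-1$ triangles: otherwise $t$ distinct common neighbors of $u$ and $v$ together with the edge $uv$ would form a copy of $B_t$, with $uv$ as the rootlet edge. Counting (edge, triangle) incidences gives
\[
3\,\cN(K_3,G)\;\le\;(t-1)|E(G)|\;\le\;(t-1)\binom{n}{2},
\]
so the crude estimate $\cN(K_3,G)=O(n^2)$ holds automatically.

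This crude bound is already strong enough to invoke the removal lemma with $H=K_3$. Indeed, $\cN(K_3,G)=O(n^2)=o(n^3)$, so there exists $D\subseteq E(G)$ with $|D|=o(n^2)$ such that $G-D$ is triangle-free. Every triangle of $G$ must contain at least one edge of $D$, and each edge of $D$ is in at most $t-1$ triangles of $G$. Therefore
\[
\cN(K_3,G)\;\le\;(t-1)\,|D|\;=\;o(n^2),
\]
which is exactly the desired conclusion.

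The only mildly subtle point is recognizing that the trivial $O(n^2)$ triangle bound, which is hopelessly far from what we want, is nevertheless just barely enough to satisfy the hypothesis $o(n^3)$ of the triangle removal lemma. Once the lemma is applied, the same local ``at most $t-1$ triangles per edge'' condition immediately converts the $o(n^2)$ bound on $|D|$ into the $o(n^2)$ bound on the number of triangles, and there is no further technical obstacle.
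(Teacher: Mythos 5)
Your proof is correct: the local observation that each edge of a $B_t$-free graph lies in at most $t-1$ triangles gives the $o(n^3)$ hypothesis for the triangle removal lemma, and reusing that same local bound on the deleted edge set $D$ yields $\cN(K_3,G)\le (t-1)|D|=o(n^2)$. The paper does not prove this proposition but simply cites Alon and Shikhelman, and your argument is exactly the standard removal-lemma (Ruzsa--Szemer\'edi style) proof of that result, so it matches the intended source rather than deviating from it.
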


We will need the following simple corollary.

\begin{cor}\label{alshcor}
If $r\ge 3$, then $\ex(n,K_r,B_t)=o(n^2)$. 
\end{cor}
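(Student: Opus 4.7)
The plan is to reduce the case $r\ge 4$ to the base case $r=3$ (Proposition \ref{alsh}) by a simple double-counting argument, using the structural restriction imposed by $B_t$-freeness.

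First I would record the key local consequence of $B_t$-freeness: in any $B_t$-free graph $G$, every edge lies in at most $t-1$ triangles, since $t$ triangles sharing a common edge would form a copy of $B_t$. Equivalently, for every edge $uv \in E(G)$ we have $|N(u)\cap N(v)|\le t-1$.

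Next I would set up the double counting. Let $G$ be an $n$-vertex $B_t$-free graph, and count pairs $(T,K)$ where $T$ is a triangle in $G$ and $K$ is a copy of $K_r$ in $G$ containing $T$. On one hand, each copy of $K_r$ contributes $\binom{r}{3}$ such pairs. On the other hand, fix a triangle $T=uvw$; any $K_r$ extending $T$ is obtained by choosing $r-3$ additional vertices from $N(u)\cap N(v)\cap N(w)$. This common neighborhood is contained in $N(u)\cap N(v)\setminus\{w\}$, which has size at most $(t-1)-1=t-2$ by the observation above. Hence the number of $K_r$'s through $T$ is at most $\binom{t-2}{r-3}$, giving
\[
\binom{r}{3}\,\cN(K_r,G)\;\le\;\binom{t-2}{r-3}\,\cN(K_3,G).
\]

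Finally, since $G$ is $B_t$-free, Proposition \ref{alsh} yields $\cN(K_3,G)\le \ex(n,K_3,B_t)=o(n^2)$, and rearranging gives $\cN(K_r,G)=o(n^2)$ (with the constant $\binom{t-2}{r-3}/\binom{r}{3}$ depending only on $r$ and $t$; note that when $r>t+1$ the binomial coefficient $\binom{t-2}{r-3}$ vanishes, consistent with the fact that there is simply no $K_r$ in a $B_t$-free graph in that regime). Taking the supremum over all $B_t$-free $n$-vertex $G$ yields $\ex(n,K_r,B_t)=o(n^2)$. There is no real obstacle here; the only thing to be careful about is correctly justifying that $w\in N(u)\cap N(v)$ must be excluded when extending to $K_r$, which is what shaves the common-neighborhood bound from $t-1$ to $t-2$ and keeps the argument tight.
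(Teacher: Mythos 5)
Your proof is correct and follows essentially the same route as the paper: fix a triangle, observe that the remaining $r-3$ vertices of any $K_r$ through it must lie among the at most $t-2$ common neighbors of two of its vertices (excluding the third), and divide by the $\binom{r}{3}$-fold overcount before invoking Proposition \ref{alsh}. No substantive difference.
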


\begin{proof}
Let $G$ be an $n$-vertex $B_t$-free graph. We can upper bound the number of copies of $K_r$ the following way. We pick a triangle $uvw$ first, $o(n^2)$ ways, and then we pick $r-3$ common neighbors of $u$ and $v$. We count every copy of $K_r$ $\binom{r}{3}$ many times, and we might count some $r$-vertex subgraphs that are not cliques. As $u$ and $v$ have at most $t-1$ common neighbors, we have at most $\binom{t-2}{r-3}$ ways to pick the $r-3$ additional vertices, thus $\cN(K_r,G)\le \binom{t-2}{r-3}\cN(K_3,G)/\binom{r}{3}$.
\end{proof}

\begin{proposition}[Alon, Shikhelman \cite{as}]\label{alsh2}
For any $t$ we have $\ex(n,K_3,F_t)=O(n)$.
\end{proposition}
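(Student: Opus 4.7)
The plan is to combine a standard link-matching argument with a structural reduction to essentially vertex-disjoint book-like clusters.

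I first observe that for every vertex $v$ of an $F_t$-free graph $G$, the link $G[N(v)]$ has matching number at most $t-1$: otherwise a matching $\{x_iy_i\}_{i=1}^{t}$ in $G[N(v)]$ would give $t$ triangles $vx_iy_i$ pairwise sharing only $v$, that is, an $F_t$. By the Erdős--Gallai matching theorem, this implies both $|E(G[N(v)])|\le (t-1)d(v)+O_t(1)$, and the existence of a vertex cover $S_v\subseteq N(v)$ of $G[N(v)]$ with $|S_v|\le 2(t-1)$. The first bound gives the crude estimate $\cN(K_3,G)=\frac{1}{3}\sum_v|E(G[N(v)])|=O_t(|E(G)|)$, which is not yet linear in $n$.

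To upgrade this, I would focus on edges $uv$ that are \emph{heavy} in the sense that $|N(u)\cap N(v)|$ exceeds some constant $C=C(t)$. For such an edge, with $W=N(u)\cap N(v)$, the matching bound in $G[N(u)]$ forces $v$ together with $S_u$ to cover all edges of $G[N(u)]$, so vertices of $W$ can have only few edges in $G[N(u)]$ avoiding $v\cup S_u$. A symmetric argument from $v$'s side, followed by an exchange/peeling argument, shows that the triangles incident to a heavy edge form a cluster that is essentially vertex-disjoint from the triangles around any other heavy edge, and whose triangle count is linear in its vertex count. Summing over clusters gives $O_t(n)$ triangles from heavy edges. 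The non-heavy edges each contribute at most $C$ triangles; since the small covers $S_v$ imply that only $O_t(n)$ edges can be triangle-relevant, the total contribution of non-heavy edges is also $O_t(n)$.

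The main obstacle is formalizing the near-disjointness of heavy clusters when $t\ge 3$. For $t=2$ the link is a star or a triangle, so any heavy edge trivially yields a book that is vertex-disjoint from the rest of the triangle structure; for $t\ge 3$ the cover $S_v$ can have up to $2(t-1)$ vertices, allowing clusters around distinct heavy edges to share vertices. The technical heart of the proof will be controlling this overlap via a careful exchange argument, showing that each shared vertex can be charged to only a bounded number of clusters, so that the linear bound is preserved.
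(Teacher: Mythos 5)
First, note that the paper does not prove this proposition at all: it is quoted from Alon--Shikhelman \cite{as}, so there is no in-paper argument to compare yours against; your attempt has to stand on its own.

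As written it does not: the entire difficulty is concentrated in the sentence claiming that the triangles around a heavy edge ``form a cluster that is essentially vertex-disjoint'' from the clusters of other heavy edges, and you yourself defer this to an unspecified ``careful exchange argument.'' That claim is the theorem, not a step toward it, so the proposal is a plan rather than a proof. A secondary inaccuracy: it is not true that ``only $O_t(n)$ edges can be triangle-relevant''; what the covers give is that every triangle $vxy$ contains an edge of the set $E^*=\{vs: v\in V(G),\ s\in S_v\}$ of size at most $2(t-1)n$ (since $xy$ lies in the link of $v$ and is covered by $S_v$), while its third edge may lie outside $E^*$. That weaker statement still suffices for the light part, but it needs to be stated correctly.

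The gap can be closed without any cluster decomposition. Call $uv$ heavy if $|N(u)\cap N(v)|\ge 3t$. (a) No vertex $u$ lies on $t$ heavy edges $uv_1,\dots,uv_t$: greedily choose $w_i\in N(u)\cap N(v_i)$ avoiding all $v_j$ and all previously chosen $w_j$ (at most $2t-1<3t$ forbidden vertices), obtaining a matching $\{v_iw_i\}_{i=1}^t$ in $G[N(u)]$ and hence an $F_t$. So the heavy edges form a graph of maximum degree at most $t-1$. (b) For any vertex $w$, the heavy edges inside $N(w)$ form a graph with matching number at most $t-1$ (vertex cover at most $2(t-1)$) and maximum degree at most $t-1$, hence at most $2(t-1)^2$ of them. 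Counting pairs (heavy edge $uv$, common neighbor $w$) over $w$ shows that at most $2(t-1)^2n$ triangles contain a heavy edge. (c) A triangle all of whose edges are light contains an edge of $E^*$, and each light edge lies in fewer than $3t$ triangles, giving at most $6t(t-1)n$ such triangles. Altogether $\cN(K_3,G)=O_t(n)$. Your opening observations (matching number of links, Gallai's cover bound, the heavy/light split) are exactly the right ingredients; what is missing is replacing the unproved disjointness heuristic by the two finite counts (a) and (b).
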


\begin{proposition}\label{genturfan}
For any $t$ and $r\ge 3$ we have $\ex(n,K_r,F_t)=O(n)$.
\end{proposition}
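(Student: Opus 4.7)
The plan is to adapt the proof of Corollary \ref{alshcor}: charge each copy of $K_r$ to an edge of $G$ together with a $K_{r-2}$ in the common neighborhood of that edge, and reduce the count to the triangle count, which is $O(n)$ by Proposition \ref{alsh2}. Where Corollary \ref{alshcor} used that the common neighborhood of any edge in a $B_t$-free graph has size at most $t-1$, here the $F_t$-free hypothesis will instead control the matching number of the common neighborhood.

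Let $G$ be an $F_t$-free graph on $n$ vertices, let $uv \in E(G)$, and write $C_{uv} = N(u) \cap N(v)$ and $H_{uv} = G[C_{uv}]$. The first step is to show that $H_{uv}$ has matching number at most $t-1$: any matching $\{x_1 y_1, \dots, x_s y_s\}$ in $H_{uv}$ gives $s$ triangles $\{u, x_i, y_i\}$ sharing the vertex $u$ with pairwise vertex-disjoint non-$u$ edges $x_iy_i$, i.e., an $F_s$ centered at $u$, so $F_t$-freeness forces $s \le t-1$. The endpoints of a maximum matching of $H_{uv}$ then form a vertex cover of $H_{uv}$ of size at most $2t-2$, and any $K_{r-2}$ in $H_{uv}$ has at most one vertex outside this cover (otherwise the edge between two outside vertices would be uncovered), yielding
\[
\cN(K_{r-2}, H_{uv}) \le \binom{2t-2}{r-2} + \binom{2t-2}{r-3}\,|C_{uv}|.
\]

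Next, I would apply the double-counting identity $\binom{r}{2}\cN(K_r, G) = \sum_{uv \in E(G)} \cN(K_{r-2}, H_{uv})$, obtained by observing that each $K_r$ in $G$ is the union of an edge $uv$ and a $K_{r-2}$ inside $C_{uv}$, with $\binom{r}{2}$ choices of $uv$ per $K_r$. The bound on $\cN(K_{r-2}, H_{uv})$ is $0$ when $C_{uv} = \emptyset$, so the constant summand $\binom{2t-2}{r-2}$ contributes only for edges $uv$ lying in at least one triangle, giving
\[
\binom{r}{2}\cN(K_r, G) \le \binom{2t-2}{r-2}\bigl|\{uv \in E(G) : |C_{uv}| \ge 1\}\bigr| + \binom{2t-2}{r-3}\sum_{uv \in E(G)} |C_{uv}|.
\]
Both terms on the right are $O(\cN(K_3, G))$, since $\sum_{uv} |C_{uv}| = 3\cN(K_3, G)$ (each triangle contributes thrice) and the edge count is bounded by the same sum. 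Proposition \ref{alsh2} then yields $\cN(K_r, G) = O(n)$, as required.

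The step one has to be most careful with is this last reduction: if the constant $\binom{2t-2}{r-2}$ were charged against every edge of $G$, one would only obtain $O(e(G)) = O(n^2)$, so it is crucial to notice that this term really contributes only for edges lying in some triangle, whose number is already controlled by Proposition \ref{alsh2}.
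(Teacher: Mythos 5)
Your proof is correct, and it takes a genuinely different route from the paper. The paper proves this by induction on $r$, using the marking/multiplicity technique from the reduction lemma of Gy\H ori and Lemons: one marks, for each $K_r$, a least-marked $(r-1)$-subclique, and shows that either every $(r-1)$-clique is marked at most $4t^2$ times (so $\cN(K_r,G)\le 4t^2\cN(K_{r-1},G)$ and induction applies), or a high-multiplicity clique can be unwound into a long path in a common neighborhood, producing an $F_t$. You instead argue non-inductively: the $F_t$-free condition bounds the matching number of each common-neighborhood graph $H_{uv}$ by $t-1$ (a matching of size $s$ there is exactly an $F_s$ centered at $u$), hence $H_{uv}$ has a vertex cover of size at most $2t-2$, every $K_{r-2}$ in $H_{uv}$ has at most one vertex off that cover, and the double count $\binom{r}{2}\cN(K_r,G)=\sum_{uv}\cN(K_{r-2},H_{uv})$ reduces everything to $\cN(K_3,G)=O(n)$ (Proposition \ref{alsh2}). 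Your observation that the constant term $\binom{2t-2}{r-2}$ need only be charged to edges lying in a triangle is exactly the point that keeps the bound at $O(\cN(K_3,G))$ rather than $O(e(G))$, and it is handled correctly. The trade-off: your argument is shorter, more elementary, and yields a cleaner explicit constant (roughly $\binom{2t-2}{r-3}$ versus the paper's $(4t^2)^{r-3}$ from iterating the induction), and it parallels the proof of Corollary \ref{alshcor} nicely; the paper's marking argument is less tailored to fans and is the kind of reduction that transfers to other forbidden configurations where no bounded vertex cover of common neighborhoods is available.
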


The main idea of our proof comes from the proof of the reduction lemma (Lemma 1.6) in \cite{gyolem}.

\begin{proof} We use induction on $r$, the base case $r=3$ is Proposition \ref{alsh2}. Assume now that $r>3$ and the statement holds for $r-1$.
Let $G$ be an $F_t$-free graph. We go through the copies of $K_r$ in $G$ one by one, and always mark a $K_{r-1}$ inside the $K_r$. Out of the $r$ possible copies of $K_{r-1}$, we mark one that was marked the least times earlier. In the case there are several such $(r-1)$-cliques (e.g. for the first $K_r$), we choose arbitrarily. At each step in the process, we call the number of times a copy $Q$ of $K_{r-1}$ had been marked the \textit{multiplicity} of $Q$. At the end of this process, we denote by $m(Q)$ the multiplicity of $Q$. If $m(Q)\le 4t^2$ for every $(r-1)$-clique, then $\cN(K_r,G)\le 4t^2\cN(K_{r-1},G)$ and we are done by the induction hypothesis.

Assume that $m(Q)>4t^2$ and $Q$ has vertices $u_1,\dots,u_{r-1}$. Let us describe a simplified version of the proof to make it easier to understand. Then we will point out the problem with it, and show how to fix that problem.

The last time $Q$ was marked, every $(r-1)$-clique in a $K_r$ containing $Q$ had multiplicity at least $4t^2$. Let $v_1$ be the last vertex of the $K_r$, and $Q_1$ be the $(r-1)$-clique we obtain from $Q$ by adding $v_1$ and deleting $u_1$. Then $m(Q_1)\ge 4t^2$. When $Q_1$ reached multiplicity $4t^2$, again it happened because of a $K_r$, thus there is a vertex $v_2$ such that replacing $u_2$ with $v_2$ we obtain an $(r-1)$-clique with multiplicity at least $4t^2-1$. 
We continue this way, but we need to describe what vertex is replaced by a new one. We always keep $u_3,\dots,u_{r-1}$, and each time we replace $v_{i-2}$ with $v_i$. This means that each time
we find a vertex $v_i$ such that $u_3,\dots,u_{r-1},v_{i-1},v_i$ induce an $(r-1)$-clique. 

However, $v_i$ could be one of the vertices $v_j$ with $j<i$. It is easy to avoid this though. When we pick $v_i$ with $i\le 2t$, we use that the $(r-1)$-clique $Q_{i-1}$ on $u_3,\dots,u_{r-1},v_{i-2},v_{i-1}$ has  multiplicity at least $p$ for some $p$. Instead of using the $r$-clique where $Q_{i-1}$ reached multiplicity $p$, we use one of the $r$-cliques where $Q_{i-1}$ reached multiplicity at least $p-2t$. They each contain a vertex different from $u_3,\dots,u_{r-1},v_{i-2},v_{i-1}$, and from each other, thus at most $i-3$ can be equal to $v_j$ with $j<i-1$. Hence we can pick a $v_i$ different from all the earlier $v_j$ and from $u_3$. We can this way pick vertices up to $v_{2t}$, since after $v_{2t-1}$ we have multiplicity at least $2t$.

Clearly, there is an $F_t$ in $G$ with vertices $u_3,v_1, \dots, v_{2t}$ and edges $u_3v_1,\dots, u_3v_{2t},v_1v_2,\dots,v_{2t-1}v_{2t}$, a contradiction.
\end{proof}

We will heavily use the fact that the above bounds do not depend on $t$. For this reason, it is going to be enough for us to forbid larger books or fans in certain subgraphs of the shadow graphs. The next proposition provides the tool, and also hints why our methods give sharp bounds only for books and fans.

\begin{proposition}\label{largebook}
\textbf{(i)} If $\cH$ is Berge-$B_t$-free, then there is no $B_{3rt}$ in the shadow graph such that each of the $3rt$ triangles is the core of a Berge triangle.

\textbf{(ii)} If $\cH$ is Berge-$F_t$-free, then there is no $F_{3rt}$ in the shadow graph such that each of the $3rt$ triangles is the core of a Berge triangle.
\end{proposition}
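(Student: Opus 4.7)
The plan is to prove both parts by a greedy selection: given a putative $B_{3rt}$ (resp.\ $F_{3rt}$) in the shadow graph whose triangles are all cores of Berge triangles, I will extract a Berge-$B_t$ (resp.\ Berge-$F_t$) in $\cH$, contradicting the freeness hypothesis.

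For (i), write $u,v$ for the rootlet vertices and $w_1,\dots,w_{3rt}$ for the page vertices of the alleged $B_{3rt}$, and for each $i$ fix distinct witness hyperedges $\alpha_i \supset \{u,v\}$, $\beta_i \supset \{u,w_i\}$, $\gamma_i \supset \{v,w_i\}$. I first pick any hyperedge $h_0\supset\{u,v\}$ to serve as the rootlet (e.g.\ $h_0=\alpha_1$), then greedily select indices $i_1,\dots,i_t$ so that $h_0,\beta_{i_1},\gamma_{i_1},\dots,\beta_{i_k},\gamma_{i_k}$ remain pairwise distinct. A hyperedge $h$ in the used set can coincide with $\beta_i$ or $\gamma_i$ only if $w_i\in h$, and since the $w_i$'s are pairwise distinct and $|h|=r$, each used hyperedge rules out at most $r$ indices. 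After $k$ selections at most $(2k+1)r$ of the remaining $3rt-k$ indices are blocked; since $(2k+1)r+k<3rt$ for all $k\le t-1$, the greedy step always finds an admissible $i_{k+1}$, and the resulting hyperedges together with the associated edges of the book yield a Berge-$B_t$ in $\cH$.

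For (ii), let $u$ be the apex of the alleged $F_{3rt}$ and $v_i,w_i$ ($i\in[3rt]$) the other fan vertices, with witness hyperedges $\alpha_i\supset\{u,v_i\}$, $\beta_i\supset\{u,w_i\}$, $\gamma_i\supset\{v_i,w_i\}$. I greedily select $i_1,\dots,i_t$, keeping the $3k$ hyperedges $\alpha_{i_j},\beta_{i_j},\gamma_{i_j}$ ($j\le k$) pairwise distinct. The key observation is that a used hyperedge $h$ can equal one of $\alpha_i,\beta_i,\gamma_i$ only if $h\cap\{v_i,w_i\}\ne\emptyset$. If $u\in h$, then since the pairs $\{v_i,w_i\}$ are pairwise disjoint in $i$, at most $|h\setminus\{u\}|\le r-1$ indices are blocked; if $u\notin h$, then only $h=\gamma_i$ is possible, forcing $\{v_i,w_i\}\subset h$, which yields at most $\lfloor r/2\rfloor\le r-1$ indices. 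Hence each used hyperedge blocks at most $r-1$ triangles, so after $k$ selections at most $3k(r-1)$ indices are blocked. Since $3k(r-1)+k=k(3r-2)<3rt$ for all $k\le t-1$, the greedy succeeds and produces a Berge-$F_t$.

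The greedy idea itself is routine; the only delicate point is the per-hyperedge blocking bound of $r-1$ in (ii). With the naive bound $r$, the inequality only closes under $t\le 3r$, which is not assumed. The sharpening to $r-1$ (exploiting that the apex vertex $u$ is common to two of the three Berge witnesses and so ``uses up'' one slot in $h$) is what makes the counting go through for all relevant $r$ and $t$.
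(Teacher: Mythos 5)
Your proof is correct and follows essentially the same strategy as the paper's: a greedy, triangle-by-triangle selection of witness hyperedges, where the key observation in both arguments is that an already-used hyperedge can serve as a witness for triangle $T_i$ only if it contains one of that triangle's non-apex (resp.\ non-rootlet) vertices, so each used hyperedge spoils only boundedly many of the $3rt$ triangles. The bookkeeping differs slightly (you fix witnesses in advance and count blocked indices, the paper relabels and discards touched triangles), but both counts close for all $r$ and $t$.
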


\begin{proof}
Assume indirectly that there is an $F_{3rt}$ in the shadow graph and each of the triangles is a core of the Berge triangle. Let $u,v_1,\dots,v_{3rt}, w_1,\dots,w_{3rt}$ be its vertices and $uv_i$, $uw_i$, $v_iw_i$ be its edges for $i\le 3rt$. We denote by $T_i$ the triangle $uv_iw_i$. We take three distinct hyperedges $h_1,h_2,h_3$ of $\cH$ such that $h_1$ contains $uv_1$, $h_2$ contains $uw_1$ and $h_3$ contains $v_1w_1$ (they exist since the triangle $uv_1w_1$ is the core of a Berge triangle). These three hyperedges contain at most $3r-6$ other vertices, thus without loss of generality they avoid $v_2,\dots,v_{3rt-3r+6}$ and $w_2,\dots,w_{3rt-3r+6}$. Then for $2\le i\le 3rt-3r+6$, we have that the hyperedges containing $uv_i$, $uw_i$ or $v_iw_i$ are distinct from $h_1,h_2,h_3$. Then we pick 3 distinct hyperedges containing $uv_2$, $uw_2$ and $v_2w_2$. 

In general, at Step $j$ for $j<t$, we have already picked $3j-3$ distinct hyperedges containing the $3j-3$ earlier edges $uv_i$, $uw_i$, $v_iw_i$ for $i<j$, and those hyperedges do not contain any of $v_j,\dots, v_{3rt-(j-1)(3r-6)},w_j,\dots, w_{3rt-(j-1)(3r-6)}$. Then we pick three distinct hyperedges for $uv_j$, $uw_j$ and $vw_j$. Afterwards, we relabel the triangles $T_{j+1},\dots, T_{3rt-(j-1)(3r-6)}$ such that the triangles that share vertices with the three new hyperedges come last, and relabel the  vertices such that $v_i$ and $w_i$ are in $T_i$ according to the new labels.  

Observe that we can execute these steps as long as we have a triangle remaining, i.e. whenever $(j-1)(3r-6)<3rt$. This means we can execute Step $t$, and obtain a Berge-$F_t$, a contradiction completing the proof of \textbf{(ii)}.

Let us continue with the sketch of the proof of \textbf{(i)}.
We proceed as in the case of fans. The only difference is that at each step, we only take two new hyperedges instead of three, thus we omit the details.
\end{proof}

Lu and Wang \cite{lw} initiated the study of the cover Tur\'an number, which is the largest number of edges in the shadow graph of a Berge-$F$-free hypergraph. Among other results, they proved the following.

\begin{thm}[Lu and Wang \cite{lw}]\label{luwa}
If $F$ has chromatic number $p$, then the number of edges in the shadow graph of a Berge-$F$-free $n$-vertex hypergraph is at most $(1-\frac{1}{p-1})\binom{n}{2}+o(n^2)$.
\end{thm}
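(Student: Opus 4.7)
The plan is to prove the contrapositive: assume the shadow $G$ of a Berge-$F$-free $n$-vertex $r$-uniform hypergraph $\cH$ has more than $\ex(n,F)+\varepsilon n^2$ edges and produce a Berge-$F$ in $\cH$. The Erd\H os--Simonovits supersaturation theorem yields at least $\delta(\varepsilon,F)\,n^{|V(F)|}$ labeled copies of $F$ in $G$, and I would aim to lift one of them.

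A copy $F'$ lifts iff for every $S\subseteq E(F')$ one has $|N(S)|\ge |S|$, where $N(S):=\{h\in\cH:\exists f\in S,\, f\subseteq h\}$ (Hall's marriage theorem); when Hall fails, the pigeonhole principle forces some hyperedge $h$ to cover two edges of $F'$, hence $|h\cap V(F')|\ge 3$. The crude count of bad (non-lifting) copies is therefore $C(F,r)\cdot|\cH|\cdot n^{|V(F)|-3}$, which beats supersaturation only when $|\cH|=o(n^3)$. Since $|\cH|$ can be as large as $\Theta(n^r)$, I would bucket shadow edges by multiplicity $d(e):=|\{h\in\cH:e\subseteq h\}|$: fix a large constant $D=D(F,r)\ge|E(F)|$ and set $E^+=\{e:d(e)\ge D\}$, $E^-=\{e:d(e)<D\}$. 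If $|E^+|\ge\ex(n,F)+\varepsilon n^2/2$, supersaturation on $(V,E^+)$ gives $\Omega(n^{|V(F)|})$ copies of $F$ whose edges all have $\ge D\ge|E(F)|\ge|S|$ covering hyperedges, so Hall is automatic and a Berge-$F$ appears.

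Otherwise $|E^-|\ge\varepsilon n^2/2$. A key structural observation is that Hall can only fail on a set $S\subseteq E(F')$ contained in $E^-$, since any $f\in S\cap E^+$ would supply $|N(f)|\ge D\ge|S|$; so every bad copy has two edges in $E^-$ covered by a single hyperedge. I would couple this with the random-pair auxiliary graph $G':=\{\phi(h):h\in\cH\}$, where $\phi(h)$ is a uniformly random pair in $h$: since distinct hyperedges in $\cH$ realize distinct edges of any copy of $F$ in $G'$, the simple graph $G'$ is $F$-free, and therefore $E[|G'|]\le\ex(n,F)$. Expanding $E[|G'|]=\sum_e[1-(1-1/\binom{r}{2})^{d(e)}]$ and choosing $D$ large pins $|E^+|$ down to $\ex(n,F)+o(n^2)$, which in the bipartite case ($\ex(n,F)=o(n^2)$) already contradicts $|E(G)|>\ex(n,F)+\varepsilon n^2$.

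For $\chi(F)\ge 3$ the Case~2 bookkeeping is more delicate. The bad-copy count restricted to pairs of $E^-$-edges in a common hyperedge can be organized by whether $f\cup f'$ spans three or four vertices: for shared-vertex pairs, $\sum_u\binom{d_u^-}{2}=O(n^3)$ by Cauchy--Schwarz, and each such 3-vertex configuration lies in at most $D=O(1)$ hyperedges. The hardest step I expect is calibrating $D$, $\varepsilon$, and these estimates so that the cumulative error in both $|E^+|$ and $|E^-|$ fits into $\ex(n,F)+o(n^2)$; this may require invoking Erd\H os--Simonovits stability on the shadow or iterating the random-pair bound, since the plain estimate loses a $\binom{r}{2}$-factor on the $E^-$ contribution.
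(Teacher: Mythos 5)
First, note that the paper does not prove Theorem~\ref{luwa}: it is quoted from Lu and Wang \cite{lw}, so there is no in-paper proof to compare against. Judged on its own, your skeleton is the right one (supersaturation, Hall's condition for lifting, and the key observation that a minimal Hall-violating set $S$ consists of edges with $|N(f)|\le |N(S)|\le |S|-1\le |E(F)|-1$, i.e.\ of low-multiplicity edges), but the decisive counting step is wrong, and you essentially concede this at the end. In your main case you bound the number of bad three-vertex configurations by $\sum_u\binom{d_u^-}{2}\cdot D=O(n^3)$, which after multiplying by $O(n^{|V(F)|-3})$ extensions gives $O(n^{|V(F)|})$ bad copies --- the \emph{same order} as the $\delta(\varepsilon)n^{|V(F)|}$ copies supplied by supersaturation, with an implied constant that does not shrink with $\varepsilon$. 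So no contradiction follows, and no amount of ``calibrating $D$ and $\varepsilon$'' or invoking stability will rescue a bound that is off by a factor of $n$. (Your case split also hints at a misconception: you cannot hope to show $|E^-|=o(n^2)$ or derive a contradiction from $|E^-|\ge\varepsilon n^2/2$ alone --- in the extremal Berge-triangle construction almost all $n^2/4$ shadow edges have multiplicity $1$.)

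The missing idea is to enumerate the bad pairs through the light edge rather than through the shared vertex. Every non-lifting copy contains two edges $f,f'$ of multiplicity $<|E(F)|\le D$ lying in a common hyperedge; choosing $f\in E^-$ ($\le\binom{n}{2}$ ways), then a hyperedge $h\supseteq f$ ($<D$ ways), then $f'\subseteq h$ ($<\binom{r}{2}$ ways) shows there are only $O(n^2)$ such pairs, each spanning at least $3$ vertices, hence only $O(n^{|V(F)|-1})=o(n^{|V(F)|})$ bad copies. Applied to all of $G$ (no case split needed: if $|E(G)|\ge\ex(n,F)+\varepsilon n^2$, supersaturation gives $\delta n^{|V(F)|}$ copies, almost all of which are good), this yields a liftable copy and finishes the proof. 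With that replacement the argument closes; the random-pair subgraph $G'$ and the bound on $|E^+|$ are then an unnecessary detour.
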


\section{Proofs}

Let us start with the constructions. We define several very similar hypergraphs. We start with the basic \textbf{Construction 1}, giving the lower bound in Theorems \ref{triangle} and \ref{etal}. Note that we introduce here more notation than needed, as we will use the notation for other constructions. We take four sets of size $\lfloor n/4\rfloor$. We add $n-4\lfloor n/4\rfloor$ elements to them. If we add one element, we add it to the last set, if we add two elements, we add them to the last two sets, and if we add three elements, we add them to the first three sets. We denote the resulting sets by $A_1,A_2,A_3,A_4$ in this order. We let $A_i=\{a_1^{i},\dots, a_{|A_i|}^{i}\}$.
We take the hyperedges of the form $\{a_1^{i}, a_2^{i},a_3^j\}$ and of the form $\{a_1^{i}, a_2^{i},a_4^j\}$ for every $i$ and $j$. It is well-known and easy to see that this hypergraph contains no Berge triangle and has $\lfloor n^2/8\rfloor$ hyperedges. We say that $A_1\cup A_2$ is the \textit{left part} and $A_3\cup A_4$ is the \textit{right part}, and we say that $a_i^{1}$ and $a_i^{2}$ are \textit{twins}, just like $a_i^{3}$ and $a_i^{4}$. So the hyperedges are obtained by taking a twin from the left part and a vertex from the right part. In the case $n$ is odd, there is a vertex without a twin, which we call \textit{extra} element.

We slightly modify the above construction to obtain \textbf{Construction 2}. We take the hyperedges of the form $\{a_1^{i}, a_2^{i},a_3^j\}$ and of the form $\{a_1^{i}, a_2^{i},a_4^j\}$ for $i\le t-1$ and every $j$, and the hyperedges of the form $\{a_3^{i},a_4^{i},a_1^{j}\}$ or $\{a_3^{i},a_4^{i},a_2^{j}\}$ for $i\le t-1$ and $j>t-1$. In other words, we take one of the first $t-1$ twins from the left part and another vertex from the right part, or we take one of the first $t-1$ twins from the right part and another vertex from the left part that is not among the first $t-1$ twins. Observe that similarly to Construction 1, if we take a twin from the left part and a twin from the right part, we have exactly two hyperedges on those four vertices. Furthermore, in both constructions each hyperedge contains a twin and a vertex from the other side, thus we have the same number $\lfloor n^2/8\rfloor$ hyperedges if $n$ is even. If $n$ is odd, then the extra element is in $|A_1|$ hyperedges both in Constructions 1 and 2, thus again we have $\lfloor n^2/8\rfloor$ hyperedges.

We obtain \textbf{Construction 3} from Construction 2 by adding the hyperedges of the form $\{a_1^{i},a_3^j,a_4^j\}$ for every $i,j\le t-1$. In other words, we take one of the first $t-1$ twins in both parts, and add one of the missing hyperedges. Clearly, Construction 3 has $\lfloor n^2/8\rfloor+(t-1)^2$ hyperedges.

We claim that if $t>2$, then Construction 3 does not contain a Berge-$B_t$. Indeed, assume first that the rootlet vertices $u,v$ of a Berge-$B_t$ are in the same part. Then they have to be twins, and their neighbors are in the other part. If a vertex in the other part does not belong to the first $t-1$ twins, then it are connected to $u$ and $v$ by a single hyperedge, a contradiction. For the first $t-1$ twins, each pair of twin vertices is connected to $u$ and $v$ by three hyperedges, thus only one of them can be a page vertex. If $u$ and $v$ are in different parts, then they have only two common neighbors: the twins of $u$ and $v$.

We continue with the generalization to uniformity $r$. It is more convenient to define the sets of twins in this case. We again place roughly $n/2$ elements to the left side and to the right side, partitioned into sets of twins. More precisely, we take $ m=\lfloor\frac{\lceil n/2\rceil}{k}\rfloor$ sets $B_1,\dots,B_m$, each of size $k$, and $m'=\lfloor \frac{n-km}{r-k}\rfloor$ sets $C_1,\dots,C_{m'}$, each of size $r-k$. \textbf{Construction 4} is defined the following way. We take the $r$-edges of the form $B_i\cup C_j$. We clearly have $n^2/4k(r-k)-O(1)$ such hyperedges.

We claim that Construction 4 (with $k=\min\{r-1,t+1\})$ does not contain a Berge-$B_t$. Assume first that the rootlet vertices $u,v$ of a Berge-$B_t$ are in the same part. Then they are in the same $B_i$ or $C_i$, and every vertex in the other part is connected to them only by a single hyperedge, thus cannot be connected to both $u$ and $v$ in the core. Vertices in the same part are connected to $u$ and $v$ only if they are in the same $B_i$ or $C_i$, thus there are at most $k-2\le t-1$ page vertices. Assume now that $u$ and $v$ are in different parts, $u\in B_i$, $v\in C_j$. Then their common neighbors could only be in $B_i$ or $C_j$. However, there is a single hyperedge that contains $u$ and $v$, and it is the only hyperedge that would connect $u$ to the elements of $C_j$ or $v$ to the elements of $B_i$.



We remark that in the case $k$ is divisible by $r-k$, we can obtain some improvement similarly to the 3-uniform case. 

We also claim that Construction 4 (with $k=\min\{r-1,2t\})$ does not contain a Berge-$F_t$.  
Indeed, let $u$ be the vertex connected to every other vertex in the $t$-fan. Then the other vertices of the $t$-fan are either twins of $u$ or are in the other part. If a vertex $v$ of the $t$-fan is in the other part, then the third vertex $w$ in their triangle is a common neighbor of $u$ and $v$, thus a twin of either $u$ or $v$. In both cases, the same single hyperedge connects the two vertices of the triangle in one part to the third vertex in the other part, a contradiction. Thus each vertex of the $t$-fan is a twin of $u$, hence there are at most $k\le 2t$ vertices in the $t$-fan, a contradiction.

\smallskip

Let us turn to the proofs of the upper bounds. Assume that we are given a hypergraph $\cH$. We say that an edge $uv$ of the shadow graph is \textit{$p$-heavy} if there are at least $p$ hyperedges of $\cH$ containing both $u$ and $v$, and otherwise $uv$ is \textit{$p$-light}. If $p=2$, we omit $p$ and say that $uv$ is heavy/light. 
The main reason to distinguish heavy edges is that whenever we have a subgraph of the shadow graph of $\cH$, and we want to show that it is the core of a Berge copy, we can greedily pick for its heavy edges new hyperedges containing a them, even if we have already picked some hyperedges to represent other edges. We will use a form of this observation from \cite{gerb}.

\begin{proposition}[\cite{gerb}]\label{trivi}
Assume that $p\ge |E(F)|$ 
and we find a Berge copy of a subgraph $F'$ of $F$ in $\cH$, such that its core is extended to a copy of $F$ with $p$-heavy edges in the shadow graph $G$. Then this copy is the core of a Berge-$F$ in $\cH$.
\end{proposition}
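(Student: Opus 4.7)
The plan is a direct greedy argument: extend the given Berge copy of $F'$ to a Berge copy of $F$ by assigning distinct hyperedges to the edges in $E(F)\setminus E(F')$ one at a time, exploiting the fact that every such edge is $p$-heavy.

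First, fix the Berge copy of $F'$ in $\cH$ and record the set $S_0$ of $|E(F')|$ distinct hyperedges it uses, one per edge of $F'$ and each containing the corresponding pair of core vertices. Enumerate the edges $e_1,\dots,e_m$ of $E(F)\setminus E(F')$, where $m=|E(F)|-|E(F')|$, in any order, and process them sequentially. At the start of step $i$, the set $S_{i-1}$ of hyperedges already committed has size $|E(F')|+(i-1)\le |E(F)|-1<p$, since $p\ge |E(F)|$ by hypothesis.

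At step $i$, the edge $e_i$ is $p$-heavy in the shadow graph $G$, so there are at least $p$ hyperedges of $\cH$ containing both endpoints of $e_i$. Because $|S_{i-1}|<p$, at least one such hyperedge lies outside $S_{i-1}$; pick any such hyperedge and add it to form $S_i$. After step $m$, the resulting assignment $f\colon E(F)\to E(\cH)$ is injective, and for every edge $e\in E(F)$ we have $e\subset f(e)$ (by construction for edges of $F'$, and because $e_i$ is contained in the chosen hyperedge for edges outside $F'$). Hence the extended copy of $F$ in $G$ is the core of a Berge-$F$ in $\cH$.

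There is no real obstacle here; the only thing to verify is the counting inequality $|E(F')|+(i-1)<p$, which is immediate from $i\le |E(F)|-|E(F')|$ and $p\ge |E(F)|$. The key conceptual point, already emphasized in the paragraph preceding the proposition, is that $p$-heaviness provides enough ``spare'' hyperedges at every step to avoid collisions with the at most $|E(F)|-1$ hyperedges committed so far.
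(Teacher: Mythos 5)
Your greedy argument is correct and is exactly the argument intended by the paper, which only sketches it in the preceding paragraph and otherwise cites \cite{gerb}: the counting $|E(F')|+(i-1)\le |E(F)|-1<p$ is the whole content, and you verify it cleanly. Nothing to add.
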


 Let $\cH_1$ denote the subhypergraph consisting of the hyperedges which contain a triangle $T$ with two or three heavy edges, such that one of the edges is $p$-light. We let $G_1$ denote the graph formed by the edges of the triangles $T$ for each hyperedge in $\cH_1$. 
 Let $\cH_2$ denote the subhypergraph consisting of the hyperedges such that each of their subedges is $p$-heavy, and $G_2$ denote the shadow graph of $\cH_2$. Let $\cH_3$ denote the subhypergraph consisting of the other hyperedges. 
 
 Let us observe that the heavy edges of a hyperedge $h$ of $\cH_3$ form cliques. Indeed, if a heavy connected component is not a clique, then there are adjacent edges $uv$ and $vw$ of that component such that $uw$ is not in that component, i.e. $uw$ is not heavy. But then $h$ is in $\cH_1$, a contradiction. Each of these cliques either has order 2, or has only $p$-heavy edges. Indeed, otherwise we can pick a subtriangle containing a $p$-light edge, thus $h$ in in $\cH_1$, a contradiction. 
 Observe that these heavy cliques each have size at most $r-1$, as otherwise $h$ would be in $\cH_2$. Let $G_3$ be the graph formed by the light edges in the hyperedges of $\cH_3$.

\begin{proposition}\label{fre}
 If $\cH$ is a Berge-$F$-free $r$-uniform hypergraph and $p=|E(F)|$, then we have the following statements.
 
 \textbf{(i)} $G_2$ is $F$-free.
 
 \textbf{(ii)} There are at most $(|E(F)|-1)\cN(K_3,G_1)+\cN(K_r,G_2)+|E(G_3)|/k(r-k)$ hyperedges in $\cH$, where $k$ is $\min\{r-1,|V(F)|-1\}$.
\end{proposition}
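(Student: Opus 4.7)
For (i), assume for contradiction that $G_2$ contains a copy of $F$. Each edge of $G_2$ is a subedge of some $h \in \cH_2$ and hence, by the definition of $\cH_2$, is $p$-heavy in $\cH$. Thus this copy of $F$ sits in the shadow graph of $\cH$ with every edge $p$-heavy. Applying Proposition \ref{trivi} with $F' = \emptyset$ and $p = |E(F)|$ produces a Berge-$F$ in $\cH$, contradicting the hypothesis.

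For (ii), write $\cH$ as the disjoint union $\cH_1 \cup \cH_2 \cup \cH_3$ and bound each piece. The bound $|\cH_2| \le \cN(K_r, G_2)$ is immediate, since each hyperedge of $\cH_2$ spans an $r$-clique in $G_2$ and distinct hyperedges give distinct vertex sets. For $\cH_1$, assign each $h$ to a triangle $T(h) \subseteq h$ witnessing its membership in $\cH_1$, i.e.\ one with two or three heavy edges, at least one of which is $p$-light. Every such $T(h)$ is a triangle of $G_1$, and all hyperedges mapped to a given triangle $T$ must contain the $p$-light edge of $T$, so at most $p - 1 = |E(F)|-1$ of them exist. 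Summing over triangles yields $|\cH_1| \le (|E(F)|-1)\cN(K_3, G_1)$.

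The main work is bounding $|\cH_3|$. For $h \in \cH_3$, the text already shows that the heavy subedges of $h$ partition its vertex set into vertex-disjoint cliques of size at most $r - 1$, and that any such clique of size $\ge 3$ consists entirely of $p$-heavy edges. I improve the size bound to $k = \min\{r-1, |V(F)|-1\}$ as follows: a clique of size $\ge |V(F)|$ would contain a copy of $F$ with all edges $p$-heavy, and Proposition \ref{trivi} would then yield a Berge-$F$ in $\cH$, a contradiction. Letting the clique sizes be $s_1, \dots, s_m$ with $\sum s_i = r$ and $s_i \le k$, the number of light subedges of $h$ equals $\binom{r}{2} - \sum_i \binom{s_i}{2}$. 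A short packing calculation — the greedy maximizer of $\sum s_i^2$ under $s_i \le k$ and $\sum s_i = r$ is obtained by filling parts to size $k$ — shows that this quantity is at least $k(r-k)$. Since every light edge lies in exactly one hyperedge of $\cH$, and that hyperedge belongs to $\cH_3$ by the construction of $G_3$, summing over $\cH_3$ gives $k(r-k)|\cH_3| \le |E(G_3)|$, and adding the three bounds proves (ii). The only genuinely technical step is the convexity/packing estimate for $\sum s_i^2$, which is routine but requires a brief case analysis when $r > 2k$.
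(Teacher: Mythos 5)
Your proposal is correct and follows essentially the same route as the paper: (i) via Proposition \ref{trivi}, and (ii) by counting each of $\cH_1,\cH_2,\cH_3$ separately with the same multiplicities. The only cosmetic difference is in the last step, where the paper lower-bounds the light edges of a hyperedge of $\cH_3$ by grouping the heavy cliques into two sides and counting the cross edges, whereas you compute $\binom{r}{2}-\sum_i\binom{s_i}{2}$ and use a convexity/packing estimate; both give the required $k(r-k)$.
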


\begin{proof}
Observe that \textbf{(i)} follows from Proposition \ref{trivi}.

For each hyperedge of $\cH_1$, we picked a triangle in $G_1$, and such a triangle contains an $|E(F)|$-light edge, thus was counted at most $|E(F)|-1$ times. This shows that $\cH_1$ has at most $(|E(F)|-1)\cN(K_3,G_1)$ hyperedges. 
For each hyperedge of $\cH_2$, we picked a $K_r$ in $G_2$, and such a clique was counted only once. This shows that $\cH_2$ has at most $\cN(K_r,G_2)$ hyperedges. 

Finally, consider a hyperedge $h$ of $\cH_3$. Recall that the heavy edges of $h$ form vertex-disjoint cliques, each of order at most $r-1$. By Proposition \ref{trivi}, those cliques must have order less than $|V(F)|$, thus order at most $k$. 
We obtained that in $h$, we have two heavy cliques of size at least 1 and at most $k$, and the edges between the cliques are light. In particular, there are at least $k(r-k)$ light edges. 
Such edges are counted only once, thus $\cH_3$ has at most $|E(G_3)|/k(r-k)$ hyperedges, completing the proof of \textbf{(ii)}.
\end{proof}

For $B_t$, we can say more about these auxiliary graphs.
 
 \begin{proposition}\label{runipr}
 If $\cH$ is Berge-$B_t$-free and $p=|E(B_t)|=2t$, then we have the following statements.
 
 \textbf{(i)} $G_1$ is $B_{3rt}$-free.

\textbf{(ii)} $G$ is $K_{1,r-1,5r^2t}$-free.

 \end{proposition}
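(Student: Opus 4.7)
The strategy for both parts is to derive a contradiction by finding a Berge-$B_t$ in $\cH$, either via an application of Proposition~\ref{largebook}(i) (possibly to a slightly smaller book than stated, by adapting its proof) or via a direct greedy construction in $\cH$.

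For part (i), suppose for contradiction that $G_1$ contains a $B_{3rt}$ with rootlet edge $uv$ and pages $w_1,\dots,w_{3rt}$. By Hall's marriage theorem applied to the three hyperedge sets $\{h:uv\subset h\}$, $\{h:uw_i\subset h\}$, and $\{h:vw_i\subset h\}$, the triangle $uvw_i$ can fail to be the core of a Berge triangle only when some hyperedge $h$ of $\cH$ contains all of $\{u,v,w_i\}$; in particular, $w_i$ must lie in a hyperedge containing both $u$ and $v$. First consider the subcase when $uv$ is in exactly one hyperedge: then at most $r-2$ pages are bad and the remaining at least $3rt-r+2$ triangles are cores of Berge triangles, easily enough to run the walk argument from the proof of Proposition~\ref{largebook}(i) and extract a Berge-$B_t$ in $\cH$. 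Otherwise $uv$ lies in many hyperedges, and I exploit the structural guarantees of $G_1$—every edge is either heavy itself or lies in an $\cH_1$-triangle with two other heavy edges—to greedily build a Berge-$B_t$: pick a hyperedge for $uv$, then for each of $t$ chosen pages (out of the $3rt$ available) pick two fresh hyperedges for the triangle's other two sides, using heaviness to avoid at each step the at most $2t$ previously used hyperedges. The main obstacle here will be the bookkeeping in this greedy procedure when several of the relevant edges are light and thus pinned to specific hyperedges.

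For part (ii), suppose $G$ contains $K_{1,r-1,5r^2t}$ with central vertex $u$, middle part $V=\{v_1,\dots,v_{r-1}\}$, and large part $W$. For each $i$, the rootlet $uv_i$ together with $W$ realizes a $B_{5r^2t}$ in the shadow graph. If for some $i$ at least $3rt$ of the triangles $uv_iw_j$ are cores of Berge triangles, Proposition~\ref{largebook}(i) gives a Berge-$B_t$ in $\cH$, a contradiction. Otherwise, by the Hall argument from part (i), for each $i$ at least $5r^2t-3rt+1$ pages $w_j$ are bad, each corresponding to a hyperedge containing $\{u,v_i,w_j\}$. Summing over $i$, and noting that each hyperedge $h$ containing $u$ contributes at most $|h\cap V|\cdot|h\cap W|\le (r-1)^2/4$ such bad pairs, we deduce that $u$ is contained in $\Omega(rt)$ hyperedges, each with nonempty intersection with both $V$ and $W$. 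From this dense local structure at $u$, we extract a Berge-$B_t$: either some edge $ux$ through $u$ becomes heavy enough that Proposition~\ref{largebook}(i) applies at rootlet $ux$ using pages drawn from the shared hyperedges, or the hyperedges through $u$ are spread widely enough that the $2t+1$ distinct hyperedges needed for a Berge-$B_t$ can be assembled directly. The constant $5r^2t$ is calibrated precisely so that this two-way counting succeeds, and managing this final extraction is the main technical obstacle.
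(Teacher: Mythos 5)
Both parts of your proposal contain genuine gaps, and in both cases the missing idea is the same: instead of rebuilding a Berge-$B_t$ greedily by hand, one should verify that each individual triangle of a large book in the shadow graph is the core of a Berge triangle and then let Proposition~\ref{largebook} do all of the bookkeeping. For \textbf{(i)}, your subcase where $uv$ is in several hyperedges does not go through as described: in this paper ``heavy'' only means contained in at least \emph{two} hyperedges, so ``using heaviness to avoid at each step the at most $2t$ previously used hyperedges'' already fails at the second page --- an edge lying in exactly two hyperedges cannot dodge two forbidden ones, and the structural guarantee of $G_1$ gives you nothing stronger. The intended argument is much shorter: by the definition of $G_1$ and Proposition~\ref{trivi}, every triangle of $G_1$ is the core of a Berge triangle, so a $B_{3rt}$ in $G_1$ contradicts Proposition~\ref{largebook}\textbf{(i)} outright; no case distinction on the multiplicity of $uv$ and no fresh greedy procedure are needed, because the relabelling argument inside the proof of Proposition~\ref{largebook} is precisely what handles the collisions you are worried about.

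For \textbf{(ii)}, the ``otherwise'' branch is where the proof actually has to live, and your plan for it does not work. When for every $i$ fewer than $3rt$ of the triangles $uv_iw_j$ are cores, the counting forces each $uv_i$ to be very heavy, and then the only way a triangle $uv_iw_j$ can fail Hall's condition is that $uw_j$ and $v_iw_j$ are both contained in exactly one hyperedge, namely a common one containing $\{u,v_i,w_j\}$; such a page can never serve as a page of a Berge book with rootlet $uv_i$, so ``applying Proposition~\ref{largebook} at rootlet $ux$ using pages drawn from the shared hyperedges'' is circular, and mere abundance of hyperedges through $u$ does not yield a Berge-$B_t$ (in Construction~4 every vertex lies in linearly many hyperedges). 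The paper avoids the dichotomy altogether by choosing the rootlet \emph{after} the hyperedges: for each $w_j$ fix a hyperedge $h_j\supseteq\{u,w_j\}$; since $h_j$ has only $r-2$ vertices besides $u$ and $w_j$, it misses some $v_i$, so by pigeonhole some $v_i$ is missed by at least $5r^2t/(r+1)\ge 3rt+r$ of the $h_j$; after fixing one hyperedge $h$ for $uv_i$ and discarding the at most $r-2$ of these pages inside $h$, each remaining triangle $uv_iw_j$ has three pairwise distinct hyperedges $h$, $h_j$, $h'_j$, and Proposition~\ref{largebook} finishes. (Your branch could in fact be closed by a different observation --- a page that is bad for every $i$ forces the unique hyperedge through $uw_j$ to contain all of $v_1,\dots,v_{r-1}$ and hence $r+1$ vertices --- but that is not the extraction you sketched.)
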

 
 \begin{proof}
Every triangle in $G_1$ is the core of a Berge triangle in $\cH$ by Proposition \ref{trivi} and the definition of $G_1$, thus Proposition \ref{largebook} implies \textbf{(i)}.

Assume indirectly that $G$ contains a $K_{1,r-1,5r^2t}$ with parts $u$ and $v_1,\dots,v_r$ and $w_1,\dots,w_{5r^2t}$. For each $w_j$, pick a hyperedge $h_j$ containing $u$ and $w_j$. If that hyperedge does not contain $v_i$, we place $w_j$ into a set $U_i$. Then every $w_j$ is in at least one set $U_i$, hence there is an $i$ with $|U_i|\ge 5r^2t/(r+1)\ge 3rt+r$. We pick a hyperedge $h$ containing $u$ and $v_i$, this contains at most $r-2$ vertices from $U_i$. Consider a $B_{3rt}$ with rootlet vertices $u,v_i$ and page vertices from $U_i$ not in $h$. For each such page vertex $w_j$, there is a hyperedge $h_j$ containing $u$ and $w_j$ and not containing $v_i$. There exists a hyperedge $h'_j$ containing $v_i$ and $w_j$, different from $h$ since $h$ does not contain $w_j$ and different from $h_j$ since $h_j$ does not contain $v_i$. Therefore, every triangle of this $B_{3rt}$ is the core of a Berge triangle in $\cH$, thus Proposition \ref{largebook} gives a contradiction, completing the proof.
\end{proof}

Now we are ready to prove Theorem \ref{runi}. Recall that it states that $\ex_r(n,\textup{Berge-}B_t)=n^2/4k(r-k)+o(n^2)$.

\begin{proof}[Proof of Theorem \ref{runi}] The lower bound is given by Construction 4. For the upper bound, let $\cH$ be a Berge-$B_t$-free $n$-vertex $r$-uniform hypergraph and apply Proposition \ref{runipr}. There are at most $(2t-1)\cN(K_3,G_1)+\cN(K_r,G_2)+|E(G_3)|/k(r-k)$ hyperedges in $\cH$, and $G_1,G_2$ both avoid $B_{tr}$. Then by Proposition \ref{alsh} and Corollary \ref{alshcor}, they contain $o(n^2)$ triangles and $o(n^2)$ copies of $K_r$. As $G_3$ is $K_{1,r-1,(r+1)(t+(4t-3)(r-2))}$-free, the Erd\H os-Stone-Simonovits theorem implies $|E(G_3)|\le n^2/4+o(n^2)$, completing the proof.
\end{proof}

We remark that we could have obtained the same result by using Theorem \ref{luwa} to bound $|E(G_3)|$ instead of \textbf{(ii)} of Proposition \ref{runipr} and the Erd\H os-Stone-Simonovits theorem. 

\smallskip

Let us turn to $t$-fans. The situation is somewhat more complicated here, as the analogue of \textbf{(i)} of Proposition \ref{runipr} holds only in the case $r=3$. Indeed, if a triangle $uvw$ of $G_1$ contains two edges from the same hyperedge $h$, then $h$ contains $\{u,v,w\}$ and at least two edges of this triangle, say $uv$ and $vw$ are heavy. Then we can pick $h$ for $uw$, another hyperedge $h'$ for $uv$ and $h''$ for $vw$ containing the corresponding edges. If $r=3$, then $h'$ and $h''$ must be distinct, showing that $uvw$ is the core of a Berge triangle, thus Proposition \ref{largebook} shows that $G_1$ is $F_{3rt}$-free. However, if $r>3$, then it is possible that 2 hyperedges contain $\{u,v,w\}$, creating a triangle in $G_1$ that is not the core of a Berge triangle.

We remark that the 3-uniform case of Theorem \ref{kfan} easily follows. 
Indeed, $G_1$ has $O(n)$ triangles by Proposition \ref{alsh2} and the argument above. $G_2$ has $O(n)$ triangles by Proposition \ref{alsh2} and \textbf{(i)} of Proposition \ref{fre}. $G_3$ has $n^2/4+o(n^2)$ edges by Theorem \ref{luwa} and the fact that $F_t$ is 3-chromatic. Therefore, \textbf{(ii)} of Proposition \ref{fre} gives the desired bound.

\smallskip

Let us consider now uniformity $r$. Gerbner \cite{gerb} showed that if a graph $F$ is a subgraph of a blow-up of another graph $H$ and $r\ge |V(F)|$, then for any Berge-$F$-free $r$-uniform $n$-vertex hypergraph $\cH$ there is a set $S$ of $o(n^2)$ edges in the shadow graph of $\cH$ such that every copy of $H$ in the shadow graph contains an edge from $S$. Here we need to extend this to smaller values of $r$. Fortunately, the assumption $r\ge |V(F)|$ was used in \cite{gerb} only to obtain that $\cH$ has $O(n^2)$ hyperedges (due to a result from \cite{gp1}). This conclusion holds when $F=F_t$ in every uniformity by Proposition \ref{genturfan}. Thus most of the proof of the following statement can be found in \cite{gerb}. We provide a proof for sake of completeness.

\begin{proposition}\label{remo}
If $\cH$ is Berge-$F_t$-free, then there is a set $S$ of edges in the shadow graph of $\cH$ such that any triangle in the shadow graph contains an edge from $S$ and $|S|=o(n^2)$.
\end{proposition}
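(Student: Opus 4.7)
The plan is to show that the shadow graph $G$ of $\cH$ has only $O(n^2)$ triangles (hence $o(n^3)$), and then invoke the triangle removal lemma to produce $S$.

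First I would bound $|E(\cH)|$. By Lemma \ref{celeb} we have $|E(\cH)|\le \ex(n,K_r,\textup{Berge-}F_t)\le \ex(n,K_r,F_t)+\ex(n,F_t)$. Proposition \ref{genturfan} gives $\ex(n,K_r,F_t)=O(n)$, while $\ex(n,F_t)\le \binom{n}{2}=O(n^2)$ is trivial. Hence $|E(\cH)|=O(n^2)$.

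Next, I classify each triangle $uvw$ of $G$ into one of two types. Type (a): some single hyperedge of $\cH$ contains all three vertices $u,v,w$. Type (b): the three edges $uv,vw,uw$ can be represented by three distinct hyperedges, so that $uvw$ is the core of a Berge triangle. Every triangle belongs to at least one type, since greedily picking a hyperedge for each edge either produces three distinct ones (type (b)) or forces some hyperedge to contain two edges and thus all three vertices (type (a)). The number of type (a) triangles is at most $\binom{r}{3}|E(\cH)|=O(n^2)$.

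For type (b), I fix a vertex $u$ and consider the auxiliary graph $D_u$ on $V(\cH)\setminus\{u\}$ whose edges are the pairs $vw$ for which $uvw$ is a Berge triangle core. A matching of size $3rt$ in $D_u$ would correspond to an $F_{3rt}$ in $G$ centered at $u$ in which every one of the $3rt$ triangles is a Berge triangle core, contradicting Proposition \ref{largebook}(ii). Hence $D_u$ has no matching of size $3rt$, and the Erdős–Gallai theorem on matching-free graphs gives $|E(D_u)|=O(n)$ (with the implied constant depending on $r,t$). Summing over $u$ shows $G$ contains $O(n^2)$ type (b) triangles. Combined with the type (a) bound, $G$ has $O(n^2)$ triangles in total, so the triangle removal lemma (the graph removal lemma cited just before Lemma \ref{celeb}) yields a set $S$ of $o(n^2)$ edges whose deletion destroys every triangle of $G$, which is exactly the required conclusion. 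I expect the only minor subtlety to be verifying that a matching of the claimed size in $D_u$ genuinely yields an $F_{3rt}$ with the Berge-triangle-core property, but this is immediate from the definition of $D_u$.
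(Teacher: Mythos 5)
Your argument is correct, and it reaches the same two-stage structure as the paper (bound the number of triangles in the shadow graph by $o(n^3)$, then apply the removal lemma), but the key counting step is genuinely different. The paper also splits triangles into those lying inside a single hyperedge ($O(n^2)$ of them, exactly as you count) and the rest; for the rest it uses a probabilistic argument: pick one random sub-edge from each hyperedge to get an $F_t$-free graph $G'$, observe that a triangle meeting every hyperedge in at most two vertices survives in $G'$ with probability at least $1/\binom{r}{2}^3$ by independence, and conclude from $\ex(n,K_3,F_t)=O(n)$ that there are only $O(n)$ such triangles. You instead observe that every triangle not contained in a hyperedge is the core of a Berge triangle (your Hall-type case analysis here is sound), and bound these deterministically: a matching of size $3rt$ in the link graph $D_u$ would produce exactly the configuration forbidden by Proposition \ref{largebook}(ii), so Erd\H os--Gallai gives $|E(D_u)|=O(n)$ and hence $O(n^2)$ such triangles after summing over $u$. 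Your bound on the non-degenerate triangles is weaker ($O(n^2)$ versus the paper's $O(n)$), but this costs nothing since only $o(n^3)$ is needed for the removal lemma; in exchange you avoid the randomized step and lean only on Proposition \ref{largebook}, which fits the spirit of the rest of the paper. One cosmetic slip: in your first display the middle quantity should be $\ex_r(n,\textup{Berge-}F_t)$ rather than $\ex(n,K_r,\textup{Berge-}F_t)$, but the inequality you intend is exactly Lemma \ref{celeb} and the conclusion $|E(\cH)|=O(n^2)$ stands.
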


\begin{proof} Observe that $\cH$ has $O(n^2)$ hyperedges by Lemma \ref{celeb} and Proposition \ref{genturfan}.
We distinguish two types of triangles in the shadow graph of $\cH$. We have triangles that are inside some hyperedge of $\cH$. They can be counted by picking a hyperedge $O(n^2)$ ways, and then picking the vertices $\binom{r}{3}$ many ways. Therefore, there are $O(n^2)$ such hyperedges.

For each hyperedge of $\cH$, we pick a sub-edge randomly with uniform distribution, independently from the other hyperedges. Let $G'$ be the graph having those edges. Then $G'$ is clearly $F_t$-free. Consider a triangle that shares at most two vertices with any hyperedge of $\cH$. Every edge of this triangle is in at least one hyperedge of $\cH$, thus it is in $G'$ with probability at least $1/\binom{r}{2}$. Distinct edges of the triangle are in $G'$ independently of each other, as they may be included in $G'$ only via distinct hyperedges.  Therefore, a triangle 
is in $G'$ with probability at least $1/\binom{r}{2}^3$. 
This implies that the number of the copies of the second type of $H$ is at most $\binom{r}{2}^3\cN(K_3,G')\le \binom{r}{2}^3\ex(n,K_3,F_t)=O(n)$.

We obtained that the number of copies of $H$ is $o(n^3)$, thus we can apply the removal lemma to obtain the desired edge set $S$.
\end{proof}

Now we are ready to prove Theorem \ref{kfan}. Recall that it states that $\ex_r(n,\textup{Berge-}F_t)=n^2/4k(r-k)+o(n^2)$.

\begin{proof}[Proof of Theorem \ref{kfan}] 
Let $\cH$ be a Berge-$F_t$-free $r$-uniform $n$-vertex hypergraph. We partition $\cH$ to $\cH_1$, $\cH_2$ and $\cH_3$ as described earlier.
We partition $\cH_1$ to two parts. We let $\cH_1'$ denote the hyperedges that have a triangle consisting of three edges that are each contained in exactly two hyperedges (i.e. 2-heavy, 3-light edges). Let $\cH_1''$ consist of the other hyperedges in $\cH_1$.

To bound the number of hyperedges in $\cH_1'$, we use Proposition \ref{remo}.
Each hyperedge of $\cH_1'$ contains a 3-light edge from $S$, and such an edge is counted at most twice. Thus there are at most $2|S|=o(n^2)$ hyperedges in $\cH_1'$.

Recall that the hyperedges of $\cH_1''$ each contain a triangle with 2 or 3 heavy edges and at least one $p$-light edge. Let us pick an arbitrary such triangle for each hyperedge of $\cH_1''$ and let $G_1''$ denote the graph consisting of the edges of these triangles. We will show that $G_1''$ is $F_{3rt}$-free. Consider a triangle $uvw$ in $G_1''$, we will show that it is the core of a Berge triangle in $\cH$. Assume first that $uv$ is light, it is contained in the hyperedge $h$. Then $uw$ and $vw$ are not light, thus they are contained in hyperedges $h'$ and $h''$ respectively with $h'\neq h$ and $h''\neq h$. If $h'= h''$, then $h'$ contains $uv$, thus $uv$ is not light, a contradiction. Thus $h'\neq h''$ and $uvw$ is the core of a Berge triangle. Assume now that each edge of $uvw$ is heavy, then one of them, say $uv$ is 3-heavy. Then we can pick the hyperedges for the edges greedily. We pick an arbitrary hyperedge $h$ containing $uw$, then an arbitrary hyperedge $h'\neq h$ containing $vw$, and an arbitrary hyperedge distinct from $h$ and $h'$ containing $uv$. Now we can apply Proposition \ref{largebook} to show that $G_1''$ is $F_{3rt}$-free. Together with Proposition \ref{alsh2}, this shows that $G_1''$ has $O(n)$ triangles. Each hyperedge of $\cH_1''$ contains such a triangle, and that triangle is contained in at most $m-1$ hyperedges, thus $\cH_1''$ has $O(n)$ hyperedges.

We obtained that $\cH_1$ has $o(n^2)$ hyperedges. Clearly $\cH_2$ has at most $\cN(K_r,G_2)$ hyperedges. As $G_2$ is $F_t$-free by \textbf{(i)} of Proposition \ref{fre}, it has $O(n)$ copies of $K_r$ by Proposition \ref{genturfan}. $G_3$ has $n^2/4+o(n^2)$ hyperedges by Theorem \ref{luwa}. Therefore, \textbf{(ii)} of Proposition \ref{fre} completes the proof.
\end{proof}

Our methods also let us to prove the following result.

\begin{thm}\label{bena} For any $F$ and $r$ we have
\[\ex_r(n,\textup{Berge-}F)\le (|E(F)|-1)\ex(n,K_3,F)+\ex(n,K_r,F)+(|E(F)|-1)\ex(n,F)/k(r-k)+o(n^2),\] 

where $k$ is $\min\{r-1,|V(F)|-1\}$.
\end{thm}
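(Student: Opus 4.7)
The plan is to follow the template of the proofs of Theorems~\ref{runi} and~\ref{kfan}. Let $p := |E(F)|$ and partition $\cH = \cH_1 \cup \cH_2 \cup \cH_3$ as before Proposition~\ref{fre}. By Proposition~\ref{fre}(ii),
$$|E(\cH)| \le (p-1)\cN(K_3,G_1) + \cN(K_r,G_2) + |E(G_3)|/k(r-k),$$
and Proposition~\ref{fre}(i) states that $G_2$ is $F$-free, so $\cN(K_r,G_2) \le \ex(n,K_r,F)$ handles the middle term. What remains is to establish the bounds $\cN(K_3,G_1) \le \ex(n,K_3,F)+o(n^2)$ and $|E(G_3)|/k(r-k) \le (p-1)\ex(n,F)/k(r-k)+o(n^2)$.

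I plan to show, via Hall's theorem, that $G_1$ and $G_3$ are essentially $F$-free. Given a copy of $F$ in $G_3$, each of its edges is $p$-light and so lies in a nonempty set of at most $p-1$ hyperedges of $\cH_3$; Hall's theorem on the bipartite incidence between $E(F)$ and these candidate hyperedges produces a system of distinct representatives and hence a Berge-$F$ in $\cH$, contradicting Berge-$F$-freeness, unless Hall's condition fails. A failure forces, by pigeonhole, some hyperedge of $\cH_3$ to contain at least two edges of $F$, pinning at least three vertices of $F$ into the same $r$-set. Since $|\cH_3| \le \binom{n}{2}/k(r-k) = O(n^2)$, summing over the $O(1)$ choices of such a pair of edges of $F$ and the $O(n^2)$ choices of hyperedge gives only $O(n^{|V(F)|-1})=o(n^{|V(F)|})$ exceptional copies, and the graph removal lemma of~\cite{efr} yields $|E(G_3)| \le \ex(n,F)+o(n^2)$. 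A sharpening of the double counting in Proposition~\ref{fre}(ii)---each light edge lies in up to $p-1$ hyperedges of $\cH_3$---gives $|\cH_3| \le (p-1)|E(G_3)|/k(r-k)$, producing the claimed $(p-1)\ex(n,F)/k(r-k)+o(n^2)$ bound. The argument for $G_1$ is analogous: Proposition~\ref{trivi} greedily supplies distinct hyperedges for any $p$-heavy edges of an embedded $F$, so any Hall failure must arise from a subset of $p$-light edges belonging to the chosen triangles, and the exceptional count plus removal lemma deliver $\cN(K_3,G_1) \le \ex(n,K_3,F)+o(n^2)$.

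The main obstacle is the rigorous exceptional-count argument for $G_1$: whereas $|\cH_3|=O(n^2)$ is automatic, a priori $|\cH_1|$ could be as large as $\Omega(n^r)$, so the naive exceptional count would give $\Omega(n^{|V(F)|+r-3})$ and fail the removal lemma's hypothesis. Closing this gap likely requires an iterative bootstrap---using Lemma~\ref{celeb} together with the easy bounds on $\ex(n,K_r,F)$ and $\ex(n,F)$ to first obtain a crude upper bound on $|\cH|$, then invoking the structural observation that Hall-failing $F$-copies in $G_1$ must embed a constant-size subgraph into a single hyperedge, which combined with the triangle-count of $G_1$ tightens $|\cH_1|$ enough to make the exceptional count $o(n^{|V(F)|})$. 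Once this is done, the removal lemma cleanly absorbs the exceptions into $o(n^2)$ deleted edges, and assembling the three bounds completes the proof.
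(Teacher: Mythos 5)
Your treatment of $\cH_2$ and $\cH_3$ is essentially sound: $G_2$ is $F$-free by Proposition~\ref{fre}\textbf{(i)}, and the Hall-plus-removal-lemma argument for $G_3$ works (any Hall failure forces two edges of the $F$-copy into one hyperedge, and since each edge of $G_3$ lies in a bounded number of hyperedges the exceptional copies number $O(n^{|V(F)|-1})$); in fact the paper gets this term more directly from Theorem~\ref{luwa}, since $G_3$ is a subgraph of the shadow graph. Your $(p-1)$ factor in $|\cH_3|\le (p-1)|E(G_3)|/k(r-k)$ only weakens your bound toward the stated one, so it is harmless. The genuine gap is in the $G_1$ step. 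The removal lemma lets you delete $o(n^2)$ edges so that $G_1$ becomes $F$-free, but that controls \emph{edges}, not \emph{triangles}: each deleted edge may lie in $\Theta(n)$ triangles, so all you can conclude is $\cN(K_3,G_1)\le \ex(n,K_3,F)+o(n^3)$, which is useless against the additive $o(n^2)$ error in the statement. Moreover, the obstacle you single out as the main difficulty --- that $|\cH_1|$ could be $\Omega(n^r)$ --- is not real: each hyperedge of $\cH_1$ is charged to the $p$-light edge of its chosen triangle, and each $p$-light edge lies in at most $p-1$ hyperedges, so $|\cH_1|\le (p-1)\binom{n}{2}$ automatically. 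Your proposed bootstrap therefore addresses a non-problem while leaving the actual edges-versus-triangles gap open.

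The paper closes this gap by changing the decomposition rather than by approximate removal. It lets $\cH_1'$ consist of the hyperedges containing a triangle with exactly two $p$-heavy edges and one $p$-light edge, picks one such triangle per hyperedge, and for each $p$-light edge marks one hyperedge whose chosen triangle contains it. Since each hyperedge of $\cH_1'$ contributes exactly one $p$-light edge, each hyperedge is marked at most once; hence the $p$-light edges of any copy of $F$ in $G_1'$ receive distinct marked hyperedges, and Proposition~\ref{trivi} extends greedily over the $p$-heavy edges. So $G_1'$ is \emph{exactly} $F$-free and $\cN(K_3,G_1')\le \ex(n,K_3,F)$ with no error term and no removal lemma. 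The hyperedges displaced from your $\cH_1$ into $\cH_3'$ are exactly why the last term carries the factor $|E(F)|-1$: the graph $G_3'$ of $p$-light edges can count each edge up to $p-1$ times. If you want to salvage your route, you need to replace the removal-lemma step for $G_1$ by an argument that makes $G_1$ genuinely $F$-free (or splits off the offending hyperedges into a part you can bound separately, as the paper does for fans with $\cH_1'$ and $\cH_1''$ in the proof of Theorem~\ref{kfan}).
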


Note that this theorem gives an improvement on Lemma \ref{celeb} if $(|E(F)|-1)<k(r-k)$ and the terms $(|E(F)|-1)\ex(n,K_3,F)$ and $o(n^2)$ are both negligible, i.e. $\ex(n,K_3,F)=o(n^2)$ and $F$ has chromatic number at least 3.

\begin{proof}
We slightly modify the definitions of the subhypergraphs. We let $\cH_1'$ be the subhypergraph consisting of the hyperedges which contain a triangle with two $p$-heavy and one $p$-light edges, where $p=|E(F)|$. We pick such a triangle for each hyperedge of $\cH_1$, and let $G_1'$ be the graph of the edges in these triangles. We keep the definition of $\cH_2$ and we let $\cH_3'$ consist of the remaining hyperedges. Let $G_3'$ be formed by the $p$-light edges in hyperedges of $\cH_3'$.

We claim that $G_1'$ is $F$-free. For each $p$-light edge $uv$ we mark one of the hyperedges where we picked a triangle containing $uv$. Observe that we marked each hyperedge at most once, as we picked only one $p$-light edge for each hyperedge. Assume now that there is a copy of $F$ in $G_1$. Then we first pick for each $p$-light edge the marked hyperedge. This can be extended to a Berge-$F$ by Proposition \ref{trivi}, a contradiction.

Now we proceed as in \textbf{(ii)} of Proposition \ref{fre}. Clearly $\cH_1'$ has at most $(|E(F)|-1)\cN(K_3,G)$ hyperedges and $\cH_2$ still has at most $\cN(K_r,G_2)$ hyperedges. 
Let $h$ by a hyperedge of $\cH_3'$. Then the $p$-heavy edges of $\cH$ form vertex-disjoint cliques of order at most $k$, thus there are at least $k(r-k)$ $p$-light edges in $h$. Such edges are counted at most $p-1$ times in $G_3'$, thus there are at most $(p-1)|E(G_3)|/k(r-k)$ hyperedges in $\cH_3'$. Theorem \ref{luwa} implies that $|E(G_3)|\le \ex(n,F)+o(n^2)$, completing the proof.
\end{proof}
 
In the 3-uniform case, we introduce a coloring on the edges of the shadow graph. This is similar to Lemma \ref{celeb2}, but our goal is to obtain an upper bound where the number of blue triangles may matter more (as there are only $o(n^2)$ of them), but the number of red edges should be divided by 2. In order to achieve that, we want to place two red edges into most of the hyperedges. 

We pick two light edges from each hyperedge of $\cH_3$ and color them red. We take all three edges from each hyperedge of $\cH_1$ and $\cH_2$ and color them blue. Note that red edges are light, but blue and edges may be heavy or light. 
 
Let $G$ denote the shadow graph of $\cH$, $G_r$ denote the graph of the red edges and $G_b$ denote the graph of the blue edges.

\begin{proposition}\label{regi}
There is no $B_{12t+1}$ in $G$ such that on each page, at least one of the edges is blue.
\end{proposition}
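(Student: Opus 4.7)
The plan is a proof by contradiction: suppose there is a $B_{12t+1}$ in $G$ with rootlet vertices $u,v$ and page vertices $w_1,\dots,w_{12t+1}$ such that each page $T_i := uvw_i$ has at least one blue edge; we will construct a Berge-$B_t$ in $\cH$, contradicting the hypothesis. The first step is to apply Hall's theorem to the three hyperedge families covering $uv$, $uw_i$, $vw_i$, which shows that $T_i$ fails to be the core of a Berge triangle in $\cH$ precisely when $\{u,v,w_i\}$ is itself a hyperedge of $\cH$ and at least two of the three edges of $T_i$ lie in no other hyperedge. In particular, every non-core index belongs to $W' := \{i : \{u,v,w_i\} \in \cH\}$. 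If at most $3t+1$ indices are non-cores, then restricting the book to the remaining at least $9t$ pages yields a $B_{9t}$ in $G$ in which every triangle is the core of a Berge triangle, contradicting Proposition~\ref{largebook}(i); this settles the easy case.

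In the remaining case there are at least $3t+2$ non-cores. For each such $i$, both $uw_i$ and $vw_i$ lie only in the single hyperedge $\{u,v,w_i\}$, so these two edges are not heavy, and the triangle of $\{u,v,w_i\}$ therefore has at most one heavy edge (only $uv$ can be heavy). Consequently $\{u,v,w_i\} \in \cH_3$, and neither $uw_i$ nor $vw_i$ is blue; the hypothesis then forces $uv$ itself to be blue, so there is a hyperedge $h^* = \{u,v,z\} \in \cH_1 \cup \cH_2$. Since $uv$ is contained in at least $3t+2$ hyperedges (the non-core hyperedges plus $h^*$), it is $p$-heavy. Hence, if $h^* \in \cH_1$ the $p$-light edge of its triangle is not $uv$, so at least one of $uz, vz$ must be heavy, while if $h^* \in \cH_2$ both $uz$ and $vz$ are $p$-heavy; either way we gain extra structure around $z$.

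I would then explicitly construct a Berge-$B_t$ using $h^*$ as the representative of the rootlet edge $uv$ and $z$ as one of its $t$ pages. The remaining $t-1$ pages are drawn from the core indices $i \in W'' := \{1,\dots,12t+1\} \setminus W'$ (whose spoke hyperedges automatically miss $h^*$, since no hyperedge of $\cH$ contains $\{u,v,w_i\}$ for $i \in W''$) and from core indices in $W'$ (where one uses the hyperedge $\{u,v,w_i\}$ itself to represent one of the two spoke edges and finds another hyperedge for the other spoke, using $N_2(i) \ge 2$ or $N_3(i) \ge 2$); if these together still do not suffice, further pages are taken from common neighbours of $u$ and $v$ produced by the heavy structure near $z$. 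The main obstacle is the extreme subcase where almost every $w_i$ is a non-core: there one must assemble most of the $t$ pages from the heavy neighbourhood of $z$, and ensure distinctness of the $2t+1$ chosen hyperedges by a greedy argument exploiting the fact that each heavy spoke edge lies in at least two hyperedges, so that collisions between representatives can always be avoided.
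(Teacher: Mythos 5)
Your first two steps are sound and match the intended argument: the Hall-type characterization of when a page triangle fails to be the core of a Berge triangle is exactly right, and the reduction to Proposition~\ref{largebook}~\textbf{(i)} via a $B_{9t}$ of genuine cores is the correct endgame. In fact your second paragraph already contains the finishing blow: once there are at least two non-core pages, $uv$ is heavy, so for every non-core page $i$ both $uw_i$ and $vw_i$ are light, the hyperedge $\{u,v,w_i\}$ lies in $\cH_3$, and neither $uw_i$ nor $vw_i$ is blue. The hypothesis of the proposition is meant to be read (and is only ever applied in the paper) with ``the edges of the page'' referring to the two page edges $uw_i,vw_i$, not the shared rootlet edge $uv$; under that reading a non-core page has no blue edge at all, so there is at most one non-core page, leaving a $B_{12t}$ of cores and an immediate contradiction with Proposition~\ref{largebook}. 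That is the paper's proof, and your argument up to that point reproduces it.

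The genuine gap is your third paragraph, which you are forced into only because you allow the rootlet $uv$ to serve as the blue edge of a page. Under that reading the statement is actually false, so the remaining case cannot be closed by any argument: take as $\cH$ the hyperedges $\{u,v,w_i\}$ for $i\le 12t+1$ together with $\{u,v,z\}$ and $\{u,z,y\}$. Then $uv$ is heavy, $uz$ is heavy but $2t$-light and $vz$ is light, so $\{u,v,z\}\in\cH_1$ and $uv$ is blue; every page of the $B_{12t+1}$ on $u,v,w_1,\dots,w_{12t+1}$ contains the blue edge $uv$; yet there is no Berge-$B_t$, since no $w_i$ can be a page (both of its spokes lie only in the single hyperedge $\{u,v,w_i\}$) and $z$ is the only other common neighbour of $u$ and $v$. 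This also shows concretely why your proposed repair fails: heaviness of $uz$ or $vz$ only supplies extra hyperedges through $z$, whose third vertices need not be adjacent to both $u$ and $v$, so the ``heavy neighbourhood of $z$'' produces no pages for a book with rootlet $uv$. The fix is to adopt the page-edge reading of the hypothesis, delete the third paragraph, and let the second paragraph complete the proof.
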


\begin{proof}
Consider a triangle $uvw$ in $G$. We claim that $uvw$ is the core of a Berge triangle unless $\{u,v,w\}$ is a hyperedge with at least 2 light edges. Indeed, consider the auxiliary bipartite graph with the edges $uv$, $vw$, $uw$ as vertices in part $A$, and the hyperedges of $\cH$ as vertices in part $B$, with an edge connected to a hyperedge if and only if the hyperedge contains the edge. A matching covering $A$ is equivalent to a Berge triangle. By Hall's matching theorem, if there is no such matching, then we have either tree vertices of $A$ connected to exactly two vertices of $B$, or tree vertices of $A$ connected to exactly one vertex of $B$, or two vertices of $A$ connected to exactly one vertex of $B$. The first possibility means two hyperedges containing $u,v,w$, impossible because of uniformity 3. The second possibility means that the hyperedge $\{u,v,w\}$ contains 3 light edges, and the third possibility means that the hyperedge $\{u,v,w\}$ contains 2 light edges.

Hyperedges with 3 light edges do not contain a blue edge. Hyperedges with two light edges may contain a single blue edge, but if we have a $B_{12t+1}$ with a blue edge on each page that contains such a triangle, then the rootlet edge of the book is a light edge. This means that only one of the triangles in the $B_{12t+1}$ can have the property that it has a blue edge on each page and is not the core of a Berge triangle. Then we have a $B_{12t}$ without such triangles and Proposition \ref{largebook} gives a contradiction completing the proof.
\end{proof}
 
 Now we are ready to prove Theorem \ref{3uni}.
 Recall that it states $\ex_3(n,\textup{Berge-}B_t)=\lfloor n^2/8\rfloor+(t-1)^2$ for $t>2$ and $\ex_3(n,\textup{Berge-}B_2)=\lfloor n^2/8\rfloor$.
 
 \begin{proof}[Proof of  Theorem \ref{3uni}] The lower bound is by Constructions 1 and 2 for $t=2$ and by Construction 3 for $t\ge 3$. Let us turn to the proof of the upper bound.
 Let us choose $\varepsilon>0$ sufficiently small and assume that $n$ is large enough. By the Erdős-Simonovits stability theorem, there is a $\delta$ such that if $G$ is an $n$-vertex $B_t$-free graph with at least $n^2/4-\delta n^2$ edges, then $G$ can be turned into a complete bipartite graph by adding and removing at most $\varepsilon n^2$ edges. Here we use that $B_t$ has chromatic number 3. 
We can also assume that $\delta$ is small enough (with respect to $t$, $r$ and $\varepsilon$). 
If $n$ is large enough, then $G_b$ has at most $\delta n^2$ triangles by Proposition \ref{alsh}. Observe that this completes the proof unless $G$ has at least $n^2/4-\delta n^2/2$ edges.

First we remove the vertices with degree at most $\varepsilon^{1/3} n/10$ one by one, to obtain the subgraph $G'$. If we remove $k$ vertices and $k\ge 10\delta n/\varepsilon$, then $G'$ has at most $(n-k)^2/4$ edges by a theorem of Simonovits mentioned in Section 2, thus $G$ has at most $(n-k)^2/4+k\varepsilon^{1/3} n/10<n^2/4-\delta n^2$ edges (using that $\delta$ is small enough), a contradiction.
Clearly $G'$ has at least $|V(G')|^2/4-\delta n^2$ edges, thus we can apply the Erd\H os-Simonovits stability theorem to obtain a bipartition $V(G')=A\cup B$. We pick the bipartition with the least number of edges inside parts, which implies that for each vertex $v\in A$, at least half of its neighbors are in $B$. In particular, each vertex of $A$ has at least $\varepsilon^{1/3} n/20$ neighbors in $B$. If $|B|\le 2n/5$, then $G'$ has at most  $6n^2/25+\delta n^2/\varepsilon^{2/3}<(n-k)^2/4-\delta n^2$ edges, a contradiction.

Let $A_1$ denote the vertices of $A$ with at least $|B|-\varepsilon^{1/3} n/40+5r^2t$ neighbors in $B$, and $A_2$ denote the set of other vertices in $A$. Recall that $G$ is $K_{1,2,5r^2t}$-free by Proposition \ref{runipr}. 
This, together with Proposition \ref{regi} and the fact that each vertex of $A$ has at least $\varepsilon^{1/3} n/20$ neighbors in $B$, imply that vertices of $A_1$ do not have a common neighbor in $A$. 
In particular, $G[A_1]$ is a matching in $G[A]$, and vertices of $A_2$ each have at most one neighbor in $A_1$, thus they have degree at most $|A_2|$ in $G[A]$. Observe that for a red edge in $G[A_1]$, at most $t-1$ vertices of $B$ are connected to both its endpoints by red edges.

Let $m_1$ denote the number of vertices not in $G'$, $m_2$ denote the number of edges in $G[A]$ incident to a vertex of $A_2$ and $m_3$ denote the number of red edges in $G[A_1]$. Let $E_1$ denote the set of blue edges in $G[A_1]$ such that at most $24t$ vertices of $B$ are connected to both endpoints,  and $m_4$ denote their number. Let $E_2$ denote the set of the other blue edges in $G[A_1]$ and $m_5$ denote their number. Let $E_1',E_2'$ denote the analogous edge sets and $m_2',m_3',m_4',m_5'$ denote the analogous quantities obtained by considering $G[B]$ instead of $G[A]$.

We will examine how these quantities add more blue triangles by decreasing the number of red edges compared to the mono-red 
$K_{\lfloor n/2\rfloor,\lceil n/2\rceil}$.
The $m_1$ low degree vertices each represent losing at least $n/3$ red edges. On the other hand, a vertex $v$ of degree $d$ is in at most $6dt$ blue triangles. Indeed, in the blue neighborhood of $v$ there is no vertex of degree $12t+1$ by Proposition \ref{regi}. 

If $u\in A$ and $v\in B$ and $uv\not\in E(G)$, then we say that $uv$ is a \textit{missing edge}. If $uv$ is not a red edge, then we say that $uv$ is a \textit{missing red edge}.
As the vertices of $A_2$ have degree at most $|A_2|$ in $G[A]$, we have $m_2\le |A_2|^2$. By the definition of $|A_2|$, at least $|A_2|\varepsilon^{1/3} n/41\ge \sqrt{m_2}\varepsilon^{1/3} n/41$ edges are missing between $A_2$ and $B$. As $m_2\le \varepsilon n^2$, at least $m_2/41\varepsilon^{1/6}$ edges are missing between $A_2$ and $B$. Recall that for each red edge of $G[A_1]$, its endpoints have at most $t-1$ common red neighbors. Moreover, at most $12t$ vertices are connected to both endpoints such that one or both the connecting edges are blue. Thus at least $m_3|B|/2-(13t-1)\ge m_3n/6$ edges are  missing between $B$ and the endpoints of red edges of $G[A_1]$. Similarly, by the definition of $m_4$, at least $m_4|B|/2-24t\ge m_3n/6$ edges are missing between $B$ and the endpoints of edges of $E_1$. The analogous statements hold for $m_2',m_3',m_4'$. 

Observe that the missing edges between $A$ and $B$ may be counted twice. Still, the number of missing  edges is more than twice the number of blue triangles found plus the number $m_3$ of additional red edges. More precisely, if $m_5=m_5'=0$, then the number of red edges is at most 
\begin{equation}\label{eq1}
n^2/4-m_1 n/3-m'_2/82\varepsilon^{1/6}-m_2/82\varepsilon^{1/6}+m_3+m'_3-m_3n/12-m'_3n/12-m_4n/12-m_4'n/12,\end{equation}
while the number of blue triangles is at most 
\begin{equation}\label{eq2}
   6m_1t\varepsilon^{1/3} n/20+m_212t+m'_212t+m_412t+m'_412t. 
\end{equation} Indeed, every blue edge is in at most $12t$ blue triangles, and every blue triangle contains at least one edge from $G[A]$ or $G[B]$. Clearly, $|E(G_r)|/2+\cN(K_3,G_b)\le n^2/8$, completing the proof.

If $m_5$ or $m_5'$ is greater than zero, than the above argument still holds, but we need to add the additional blue triangles containing an edge of $E_2$ or $E_2'$. Consider first those triangles where the third vertex is not incident to $E_2\cup E_2'$. Then we  have two blue edges in the triangle. Observe that we have only counted edges missing between $A$ and $B$ so far, thus we did not subtract those. This means that for each blue triangle, we lose two red edges, the sum $\cN(K_3,G_b)+|E(G_r)|/2$ does not change. However, in the case the third vertex of the triangle is incident to an edge in $E_2\cup E_2'$, we may count these edges twice, thus we need to deal with those triangles in a different way.


Let $uv\in E_2$, $u'v'\in E_2'$. As $u$ is connected only to $v$ in $G[A]$ and $u'$ is connected only to $v'$ in $G[B]$, every triangle containing $uu'$ also contains either $v$ or $v'$. This means that every hyperedge containing $uu'$ is inside $P:=\{u,u',v,v'\}$.

Let us consider what sets of hyperedges can be inside $P$. If there are 4 hyperedges in $P$, then there are 4 blue triangles and four blue edges between $A$ and $B$ inside $P$. If there are 3 hyperedges in $P$, then there are 4 blue triangles and 4 blue edges. Two hyperedges in $P$ can appear in two different ways, either they share the edge in $E_2\cup E_2'$, or not. In the first case, there are no blue triangles. In the second case, we have 2 blue triangles and 3 blue edges and a missing edge.
If there is 0 or 1 hyperedge on those 4 vertices, then we do not have blue triangles.

Let us consider the case $t=2$. Let us examine first the sets $P:=\{u,u',v,v'\}$ with 4 hyperedges in $P$. If a vertex $x$ not in $P$ is connected in $G$ to two vertices of $P$, say $u$ and $v$, then we can find a $B_2$. Indeed, if $ux$ and $vx$ are in the same hyperedge $h$, then $h$ corresponds to $uv$, $\{u,v,u'\}$ corresponds to $uu'$, $\{u,v,v'\}$ corresponds to $vv'$, $\{u,u',v'\}$ corresponds to $uv'$ and $\{v,u',v'\}$ corresponds to $vu'$. If $ux$ is in a hyperedge $h_1$ and $vx$ is in a different hyperedge $h_2$, then we extend this correspondence with $\{u,v,u'\}$ corresponding to $uv$, $\{u,u',v'\}$ corresponding to $uv'$ and $\{v,u',v'\}$ corresponding to $vv'$.

This means that for each set of four blue triangles of this type (let $m_6$ be their number), there are at least $n-4$ missing edges incident to vertices in $P$. Observe that any edge $uv\in E_2$ is in at most $6t$ 4-sets of this type. Indeed, there is a Berge triangle with core $uvu'$ and another with core $uvv'$, thus we can apply Proposition \ref{largebook}. This means that there are at least $m_6(n-4)/6t$ missing edges incident to vertices contained in 4-sets of this type.
Note that we may have counted these missing edges once already, because of their other endpoints.

If there are 3 hyperedges on $P$, without loss of generality they are $\{u,v,u'\}$ containing $uu'$, $\{u,v,v'\}$ containing $uv'$ and $\{u,u',v'\}$ containing $u'v'$. If there are additional hyperedges containing $uv$ and $u'v'$, then we found a $B_2$, a contradiction. If there is a common neighbor $x\not\in P$ of $u$ and $v$ in $G$ such that $uv$ and $vx$ are from different hyperedges $h_1$ and $h_2$, then we can easily find a $B_2$. If $u'$ and $v'$ have 4 different common neighbors $x_1,x_2,x_3,x_4$ such that $u'x_i$ and $v'x_i$ are from different hyperedges, then consider the hyperedges containing $u'x_1$ and $v'x_1$. They avoid at least one of the other neighbor $x_2$, thus two other hyperedges contain $u'x_2$ and $v'x_2$. These four hyperedges with $\{u,u',v'\}$ form a $B_2$, a contradiction. We obtained first that at least one of $uv$ and $u'v'$ is not contained in any other hyperedge, thus has the property that the common neighbors of their endpoints are connected to them by different hyperedges. Therefore, the endpoints of that edge have at most 3 common neighbors. This means that at least $n/3$ red edges are missing between $A$ and $B$ that are incident to $P$. Let $m_7$ be the number of sets $P$ with three triangles. Observe that there is a Berge triangle with core $uvu'$ or $uvv'$, and a Berge triangle with core $u'v'v$ or $u'v'u$. This implies that any edge $uv\in E_2$ or $u'v'\in E_2'$ is in at most $12t$ 4-sets of this type. Indeed, otherwise we can find $12t$ triangles sharing the edge $uv$ or $u'v'$, thus we can apply Proposition \ref{largebook}. This means that there are at least $m_7n/36t$ missing edges incident to vertices contained in 4-sets of this type.

Let $m_8$ be the number of sets $P$ with two blue triangles. Recall that there are 3 blue edges inside $P$. An edge $uv\in E_2\cup E_2'$ can be in at most two such sets. Indeed, without loss of generality $\{u,v,u'\}$ and $\{u,u',v'\}$ are hyperedges of $\cH$, these hyperedges can be used for the edges $vu'$ and $uu'$. Similarly from the set $\{u,v,u'',v''\}$ we have distinct hyperedges containing, say, $uu''$ and $vv''$, and from the set $\{u,v,u''',v'''\}$ we have another hyperedge containing $uv$. We found a Berge-$B_2$, a contradiction.

Observe that an edge $uv\in E_2\cup E_2'$ can be in sets $P$ with 2,3 or 4 blue triangles at the same time. We count the blue edges as missing red edges inside sets with two blue triangles only when we consider $m_8$. This means that there are at least $3m_8$ missing red edges incident to vertices in such sets, and these missing red edges have not been counted elsewhere. The missing edges incident to $uv$ have been counted at most 4 times: twice because $uv$ can be in sets with 3 or 4 blue triangles, and twice because we may have counted the missing edge at its other endpoint already.

Now we can extend our equations (\ref{eq1}) and (\ref{eq2}): the number of red edges is at most $n^2/4-m_1 n/3-m'_2/82\varepsilon^{1/6}-m_2/82\varepsilon^{1/6}+m_3+m'_3-m_3n/12-m'_3n/12-m_4n/12-m_4'n/12-m_6(n-4)/24t-m_7n/144t-3m_8$, while the number of blue triangles is at most $6m_1t\varepsilon^{1/3} n/20+m_212t+m'_212t+m_412t+m'_412+4m_6+3m_7+2m_8$. Clearly, $|E(G_r)|/2+\cN(K_3,G_b)\le n^2/8$, completing the proof in the case $t=2$.

Let us assume now that $t\ge 3$.
Let us consider an auxiliary bipartite graph $H$. The two parts of $H$ are $E_2$ and $E_2'$. Two vertices $uv\in E_2$ and $u'v'\in E_2'$ are connected by two blue edges if there are 4 hyperedges on $P:=\{u,u',v,v'\}$ in $\cH$,  let $a$ be the number of such sets. Let $b$ be the number of sets $P$ with 3 hyperedges, and connect the pair with one blue edge in $H$. Let $c$ be the number of sets $P$ with 2 hyperedges, and connect the pair again with one blue edge in $H$. Let $d$ be the number of sets $P$ with 4 red edges in $G$, then we add a directed red edge from $uv$ to $u'v'$ if $u'v'$ is contained in the corresponding hyperedges, and from $u'v'$ to $uv$ if $uv$ is contained in the corresponding hyperedges. 
For a vertex $uv$ of $H$, we denote by $d(uv)$ the number of blue edges incident to it plus the number of outgoing red edges.

\begin{clm}\label{claim2}
For every vertex $uv$ of $H$, we either have $d(uv)<t$, or $uv$ is not incident to any blue edge in $H$. 
\end{clm}

\begin{proof}[Proof of Claim] Let us assume that $uv$ is connected to $u_1v_1, \dots,u_kv_k$ by two blue edges, to $w_1x_1,\dots, w_\ell x_\ell$ by one blue edge, and a red edge goes from $uv$ to each of $y_1z_1,\dots, y_mz_m$. First we will try to find a $B_t$ in $H$, with rootlet edge $uv$ and page vertices $u_1,v_1,\dots,u_k,v_k$, $w_i$ or $x_i$ for every $i\le \ell$ and $y_1,\dots,y_m$. 

The hyperedges $\{u,v,u_i\}$ correspond to the edges $uu_i$, the hyperedges $\{u,v,v_i\}$ correspond to the edges $vv_i$, the hyperedges $\{u,u_i,v_i\}$ correspond to the edges $uv_i$, and the hyperedges $\{v,u_i,v_i\}$ correspond to the edges $vu_i$. Observe that a blue edge in $H$ connecting $uv$ to $w_ix_i$ means that in $G$ we have two blue triangles on these four vertices and in $\cH$ we have a hyperedge containing $uv$ and, say, $w_i$, and a hyperedge containing $w_ix_i$ and, say, $u$. Then $\{u,v,w_i\}$ corresponds to $vw_i$ and $\{u,w_i,x_i\}$ corresponds to $uw_i$. The red edge from $uv$ to $y_iz_i$ means that $\{y_i,z_i,u\}$ and $\{y_i,z_i,v\}$ are in $\cH$; the first one corresponds to $uy_i$ and the second one corresponds to $vy_i$. 

It is left to find a hyperedge corresponding to the edge $uv$.
By the definition of $E_2$ and $E_2'$, $u$ and $v$ have at least $24t$ common neighbors. They have at most $12t$ common blue neighbors by Proposition \ref{regi}, thus they have at least $12t$
common red neighbors $q_1,\dots,q_{12t}$.

If there is a hyperedge $\{u,v,q_i\}$ with $1\le i\le 12t$, then that corresponds to the edge $uv$, completing the proof. If there is no such hyperedge, then we find another $B_t$. For each $i$, a hyperedge $h$ contains $uq_i$, a hyperedge $h'$ contains $vq_i$ and a hyperedge $h''$ contains $uv$. These are three different hyperedges, thus $uvq_i$ is the core of a Berge triangle, hence Proposition \ref{largebook} completes the proof.
\end{proof}

Let $H'$ denote the subgraph of $H$ we obtain by deleting the vertices from $H$ that are not incident to any blue edges in $H$. Let $A'$ denote the part of $H'$ corresponding to edges inside $A$, $B'$ denote the other part, and let $m_9$ and $m_9'$ denote the number of vertices in $A'$ resp. $B'$. Assume without loss of generality that $m_9\le m_9'$. 

The number of edges in $H'$ is $4a+2b+2c+d$, thus
we have $4a+2b+2c+d\le (m_9+m_9')(t-1)$ by Claim \ref{claim2}. The number of edges incident to part $A'$ in $H'$ is at least $2a+2b+c$, thus we also have $2a+2b+c\le m_9(t-1)$ by Claim \ref{claim2}.

Let $E_3$ denote the set of edges with both endvertices incident to $E_2\cup E_2'$. The only blue triangles left to count are those with all three edges in $E_3$. Recall that we obtained that the number of blue triangles of other types was at most twice the number of missing red edges counted earlier. 
We will consider the missing red edges in $E_3$, those we did not count earlier. Let $m_{10}$ be their number. This means that it is enough to prove that the number of blue triangles in $E_3$ is at most $m_{10}/2+(t-1)^2$.

We have $4a+4b+2c$ blue triangles in $E_3$, but at least $b$ of them does not correspond to a hyperedge. This means that  it is enough to prove that $4a+3b+2c- m_{10}/2\le (t-1)^2$.

Assume first that $m_9\ge t-1$. We claim that 
$(m_9+m_9')(t-1)-m_9m_9'\le (t-1)^2$. Indeed, the inequality trivially holds with equality if 
$t-1=m_9$. If we decrease $t$ by one, then the left hand side decreases by $m_9+m_9'$ and the right hand side decreases by $2t-3\le m_9+m_9'$, proving the claimed inequality.
We have that $m_{10}\ge 4a+4b+4c+2(m_9m_9'-a-b-c-d)=2m_9m_9'+2a+2b+2c-2d$. Indeed, for $a+b+c$ pairs of edges $uv\in A'$, $u'v'\in B'$, there are four missing red edges inside $\{u,v,u',v'\}$, for $d$ pairs of such edges there are no missing red edges inside $\{u,v,u',v'\}$, and for all the other such pairs there are at least two missing red edges inside $\{u,v,u',v'\}$. Thus we have $4a+3b+2c-m_{10}\le 4a+3b+2c-(2m_9m_9'+2a+2b+2c-2d)/2=3a+2b+c+d-m_9m_9'\le (m_9+m_9')(t-1)-m_9m_9'\le (t-1)^2$.

If $m_9<t-1$, then we use the simple bound $m_{10}\ge 4a+4b+4c$. Then $4a+3b+2c-m_{10}/2\le 2a+b\le m_9(t-1)<(t-1)^2$.
 \end{proof}

\section{Concluding remarks}

With a more careful application of our methods, one can
improve the upper bound of Theorem \ref{runi} to $\ex_r(n,\textup{Berge-}B_t)\le n^2/4k(r-k)+O(n)$. Inside each hyperedge we either color at least $k(r-k)$ light edges to red, or a triangle with at least 2 heavy edges to blue. Observe that every bue triangle is the core of a Berge triangle. Following the proof of Theorem \ref{3uni}, we obtain that every degree in $G[A_1]$ is less than $r-1$, thus there are $O(n)$ edges inside $A_1$. This, together with Proposition \ref{largebook} implies that there are $O(n)$ blue triangles with an edge inside $A_1$. The rest of the blue triangles can be handled similarly as in the proof of Theorem \ref{3uni}.

To obtain an exact result, we need to find the best way to improve Construction 4. It is unclear in general, but the straightforward generalizations of Construction 3 contain $B_2$, thus we can have the following conjecture.

\begin{conjecture} If $r\le 6$ and $n$ is large enough, then
$\ex_r(n,\textup{Berge-}B_2)= n^2/8(r-2)$.
\end{conjecture}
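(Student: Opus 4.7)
The plan is to combine the sharpened upper bound $n^{2}/(4k(r-k))+O(n)$ sketched in the first paragraph of the concluding remarks with a stability analysis modelled on the proof of Theorem~\ref{3uni}(i), and to pair it with a matching construction that refines Construction~4 for each $r\in\{3,\dots,6\}$. Since $\ex_{3}(n,\textup{Berge-}B_{2})=\lfloor n^{2}/8\rfloor$ is already known, the genuine content lies in the cases $r\in\{4,5,6\}$.

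For the lower bound, Construction~1 already achieves the bound when $r=3$, so what is needed is a higher-uniformity analogue of Construction~3. Since ``straightforward generalizations of Construction~3 contain $B_{2}$'', the construction must be more careful: I would start from Construction~4 with $k=\min\{r-1,3\}$ and superimpose a carefully chosen family of ``mixed'' hyperedges that pair up pre-designated twin-blocks on the two sides. The uniformity constraint $r\le 6$, together with the divisibility condition $(r-k)\mid k$ highlighted just before the conjecture, should permit fitting enough extra hyperedges without enlarging the common neighbourhood of any potential rootlet vertex beyond the allowed threshold, thus preserving Berge-$B_{2}$-freeness and reaching $\lfloor n^{2}/8(r-2)\rfloor$ hyperedges.

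For the upper bound, I would partition $\cH$ into $\cH_{1},\cH_{2},\cH_{3}$ as in Section~3 and bound $|\cH_{1}|+|\cH_{2}|=o(n^{2})$ via Proposition~\ref{runipr} together with Proposition~\ref{alsh} and Corollary~\ref{alshcor}. Applying the Erd\H os--Simonovits stability theorem to the shadow graph of $\cH_{3}$, which is $B_{2}$-free by Proposition~\ref{fre}(i) and $K_{1,r-1,O(1)}$-free by Proposition~\ref{runipr}(ii), yields a nearly-bipartite split $V(G)=A\cup B$. I would then classify every edge inside $A$ or $B$ into heavy/light categories using the threshold $p=|E(B_{2})|$, define the analogues of the sets $E_{1},E_{2},E_{1}',E_{2}'$ from the proof of Theorem~\ref{3uni}(i), and run the same ledger: each bad edge inside $A$ or $B$ forces a controlled number of missing cross-edges, and the extra blue triangles it creates cancel against those missing red edges up to an additive constant that measures the improvement over the basic bipartite construction.

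The main technical obstacle is the higher-uniformity analogue of Claim~\ref{claim2}. In the $3$-uniform proof each hyperedge is itself a triangle, so pairing a left-internal edge $uv\in E_{2}$ with a right-internal edge $u'v'\in E_{2}'$ produces a bipartite auxiliary graph $H$ in which every non-isolated vertex has degree below $t=2$, guaranteeing that very few bonus hyperedges survive. For $r>3$ each hyperedge now has $r-4$ extra vertices that can themselves serve as page vertices of a Berge-$B_{2}$, and a single hyperedge can carry several heavy triangles simultaneously, so tracking which hyperedges remain available during the greedy matching requires a more intricate Hall-type argument in the spirit of Proposition~\ref{regi}. I expect the hypothesis $r\le 6$ to be used precisely to keep the number of twin-block configurations in the case analysis bounded, and the divisibility condition $(r-k)\mid k$ to be exactly what aligns the count produced by the construction with the value forced by the upper bound.
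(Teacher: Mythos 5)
This statement is one of the paper's \emph{conjectures}; the paper contains no proof of it, and explicitly identifies what is missing: ``the straightforward generalizations of Construction 3 contain $B_2$,'' so no extremal construction is known, and the upper-bound machinery of Section 3 only yields $n^2/4k(r-k)+O(n)$ rather than an exact value. Your text does not close either gap. On the lower-bound side you never actually define the construction: ``superimpose a carefully chosen family of mixed hyperedges'' is exactly the step the paper says it cannot do, and you neither verify Berge-$B_2$-freeness nor count the hyperedges. On the upper-bound side you correctly name the higher-uniformity analogue of Claim \ref{claim2} as ``the main technical obstacle'' and then leave it unresolved; the ledger argument of Theorem \ref{3uni} leans heavily on the fact that in the $3$-uniform case each hyperedge \emph{is} a triangle (e.g.\ the Hall-type argument in Proposition \ref{regi} and the auxiliary graph $H$ on $E_2\times E_2'$), and none of that transfers without new ideas. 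So what you have is a research plan, not a proof.

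There is also a concrete arithmetic inconsistency in your blueprint. For $t=2$ one has $k=\min\{r-1,3\}$, so $4k(r-k)$ equals $8$, $12$, $24$, $36$ for $r=3,4,5,6$ respectively, whereas $8(r-2)$ equals $8$, $16$, $24$, $32$. Hence for $r=4$ and $r=6$ the sharpened upper bound $n^2/4k(r-k)+O(n)$ that you propose to use does not match the target $n^2/8(r-2)$ at the level of the main term, and for $r=4$ Construction 4 itself already gives $n^2/12-O(1)>n^2/16$ hyperedges, so ``pairing the sharpened upper bound with a matching construction at $n^2/8(r-2)$'' cannot be carried out as written. Before attempting a proof you would need to reconcile the conjectured constant with Theorem \ref{runi} for each $r\le 6$; as it stands, your plan does not engage with the one point that makes this a conjecture rather than a theorem.
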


If $t>r-3$, i.e. $k=r-1$, then we can improve Construction 4 by $(t-1)^2$, the same way as Construction 3 improves Constructions 1 and 2. We partition the right side to twins the same way as the left part. We take the first $t-1$ twins from the left part and another vertex from the right part, and the first $t-1$ twins from the right part and another vertex from the the left part that is not among the first $t-1$ twins. Then we take one of the first $t-1$ twins in the left part, one of the first $t-1$ twins on the right part, and add one of the missing hyperedges.

\begin{conjecture} If $t>r-2$ and $n$ is large enough, then
$\ex_r(n,\textup{Berge-}B_2)= n^2/4(r-1)+(t-1)^2$.
\end{conjecture}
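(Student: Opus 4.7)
The plan is to adapt the proof of Theorem \ref{3uni}(ii) to the $r$-uniform setting, using that the hypothesis $t>r-2$ forces $k=\min\{r-1,t+1\}=r-1$ and hence $r-k=1$. This rigidifies the structure of hyperedges in $\cH_3$ in the same way as in the 3-uniform case: each such hyperedge consists of an $(r-1)$-clique of heavy edges together with a single apex vertex joined to the clique by $r-1$ light edges. The lower bound is given by the construction in the concluding remarks: both sides of Construction 4 are partitioned into twin groups of size $r-1$, the backbone keeps a directed choice of full left-side twin group with a right-side apex (and the symmetric version for the first $t-1$ right-side twin groups joined to a left-side vertex outside the distinguished groups), and $(t-1)^2$ additional "missing" hyperedges are added across the pairs of distinguished twin groups. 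Berge-$B_t$-freeness follows from a case analysis identical in spirit to the one used for Construction 3.

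For the upper bound, partition $\cH$ into $\cH_1\cup\cH_2\cup\cH_3$ with threshold $p=2t$ as in the proof of Theorem \ref{runi}. Proposition \ref{runipr}(i) together with Corollary \ref{alshcor} gives $\cN(K_3,G_1)=o(n^2)$, Proposition \ref{fre}(i) together with Corollary \ref{alshcor} gives $\cN(K_r,G_2)=o(n^2)$, and Proposition \ref{fre}(ii) then bounds the remaining contribution by $|E(G_3)|/(r-1)$. Applying the Erd\H os-Simonovits stability theorem to $G_3$ and deleting low-degree vertices exactly as in the proof of Theorem \ref{3uni}(ii), one obtains a near-balanced bipartition $V(G')=A\cup B$ with every remaining vertex having many neighbors on the opposite side. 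Let $A_1,B_1$ be the dense halves of vertices with nearly full degree to the opposite side; Proposition \ref{runipr}(ii) together with Proposition \ref{largebook} force $G[A_1]$ and $G[B_1]$ to be essentially matchings, with the "core matching" being red.

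Color the $r-1$ light edges of each hyperedge of $\cH_3$ red and the edges of representative triangles of $\cH_1\cup\cH_2$ blue, so that $|\cH|\le|E(G_r)|/(r-1)+\cN(K_3,G_b)+o(n^2)$. The blue triangles requiring compensation are those with an edge in $G[A_1]\cup G[B_1]$. Split the blue edges of $G[A_1]$ into $E_1$ (with fewer than $24t$ common $B$-neighbors) and $E_2$ (with at least $24t$ common $B$-neighbors), and define $E_1',E_2'$ analogously on the $B$-side; the $E_1,E_1'$ contributions are absorbed by the missing red edges they force to the opposite side, so the whole difficulty reduces to controlling the joint contribution of $E_2$ and $E_2'$. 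Set up the auxiliary bipartite graph $H$ on parts $E_2,E_2'$ with the same edge classification by quantities $(a,b,c,d)$ as in the 3-uniform proof, establish the analogue of Claim \ref{claim2} that $d(uv)\ge t$ forces a Berge-$B_t$, and close the count via $(m_9+m_9')(t-1)-m_9m_9'\le(t-1)^2$ exactly as in the 3-uniform case.

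The main obstacle is the analogue of Claim \ref{claim2}. For $r=3$ a cross set $P=\{u,u',v,v'\}$ has only four possible hyperedges and the case analysis is done by hand; for general $r$ with $k=r-1$, the analogous cross set consists of two opposite $(r-1)$-twin groups of total size $2(r-1)$, supporting up to $2(r-1)^2$ candidate hyperedges of the form $(r-1)$-subset of one twin group together with one vertex of the other. Classifying the compatible families of such hyperedges inside $P$, and then extracting a Berge-$B_t$ whenever $d(uv)\ge t$, is the technically involved step; however, the heavy $(r-1)$-clique inside each twin group supplies ample spare hyperedges to represent any fixed edge of the would-be book, and Proposition \ref{largebook} applied to large books whose pages are cores of Berge triangles provides the needed contradiction just as in the 3-uniform argument. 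Once this generalized claim is in place the bookkeeping $4a+3b+2c-m_{10}/2\le(t-1)^2$ carries through verbatim and yields the target bound $n^2/(4(r-1))+(t-1)^2$.
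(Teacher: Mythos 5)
The statement you are proving is stated in the paper only as a conjecture: the author gives the lower--bound construction in the concluding remarks but explicitly does not prove the matching upper bound, and your proposal does not close that gap either. Your outline correctly identifies the framework (the decomposition $\cH_1\cup\cH_2\cup\cH_3$, the red/blue colouring with weight $1/(r-1)$ per red edge, stability, and the auxiliary graph $H$ on $E_2\cup E_2'$), which is essentially the route the concluding remarks sketch for the weaker bound $n^2/4k(r-k)+O(n)$. But the step you flag as ``the technically involved step'' and then dispatch with ``provides the needed contradiction just as in the 3-uniform argument'' is precisely the missing content. For $r=3$ a cross set $P$ has four vertices, at most four hyperedges, and the case analysis by $(a,b,c,d)$ is exhaustive; for general $r$ with $k=r-1$ the relevant local configurations live on two opposite heavy $(r-1)$-cliques plus their incident hyperedges, the number of admissible hyperedge families is much larger, and --- as the paper itself notes --- $G[A_1]$ is no longer a matching but only a graph of maximum degree less than $r-1$, so a single vertex of $A_1$ can lie in several twin groups and the clean pairing $E_2\leftrightarrow E_2'$ breaks down. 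Without an actual classification of these configurations and a proof of the analogue of Claim \ref{claim2}, there is no argument.

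Two further points. First, the bookkeeping cannot ``carry through verbatim'': each missing red edge is now worth $1/(r-1)$ of a hyperedge rather than $1/2$, and each hyperedge of $\cH_3$ supplies $r-1$ red edges rather than $2$, so every coefficient in the final inequality $4a+3b+2c-m_{10}/2\le(t-1)^2$ changes, and whether the extremal local configurations still contribute exactly $(t-1)^2$ is exactly what the conjecture asserts. Second, even the lower bound deserves more than ``identical in spirit'': the paper warns that the straightforward generalizations of Construction 3 contain $B_2$, and the hypothesis is $t>r-2$ rather than the $t>r-3$ needed merely for $k=r-1$, which signals that the boundary case is delicate and the Berge-$B_t$-freeness of the augmented construction must be checked explicitly. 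As it stands, your proposal is a plausible plan of attack on an open problem, not a proof.
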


\smallskip

We remark that in Proposition \ref{fre} and Theorem \ref{bena}, we showed that heavy edges form vertex-disjoint cliques of order at most $k$, thus there are at least $k(r-k)$ light edges. In fact, this gives a better bound: there are at most $\lfloor r/k\rfloor \binom{k}{2}+\binom{r-k\lfloor r/k\rfloor}{2}$ heavy edges, thus at least $\binom{r}{2}-\lfloor r/k\rfloor \binom{k}{2}-\binom{r-k\lfloor r/k\rfloor}{2}$ light edges. However, it is an improvement only if $r>2k$. In particular, not in the range of Theorems \ref{runi} and \ref{kfan}. Moreover, it was shown in \cite{gerb} that if $\ex(n,K_3,F)=o(n^2)$ and $r> (\chi(F)-1)(|V(F)|-1)$, then $\ex_r(n,\textup{Berge-}F)=o(n^2)$. Therefore, in the cases when Theorem \ref{bena} gives a better bound than Lemma \ref{celeb} that we discussed after stating Theorem \ref{bena}, we do not get any improvement. 

\smallskip

A more careful analysis of the proof of Theorem \ref{3uni} gives a stability result. If any of $m_1,m_2,m_3,m_4$ is positive, then there are at most $n^2/8-\Omega(n)$ hyperedges. Therefore, if a $B_t$-free 3-uniform hypergraph has at least $n^2/8-o(n)$ edges, then its shadow graph is an almost balanced, almost complete bipartite graph, with independent edges added inside the parts. In particular, there is a set of independent edges such that every hyperedge contains at least one of them.

\smallskip

Gy\H ori proved a stronger result than Theorem \ref{triangle}. He showed that $\sum_{h\in\cH} |h|-2\le n^2/4$. Since that, there were several results concerning weighted size of Berge-$F$-free non-uniform hypergraphs. In particular, Gerbner and Palmer \cite{gp1} showed that if every hyperedge has order at least $|V(F)|$, then $\sum_{h\in\cH} |h|= O(n^2)$. English, Gerbner, Methuku and Palmer \cite{sgmp} showed that if every hyperedge has order at least $|V(F)|$, then $\sum_{h\in\cH} |h|^2= O(n^2)$. Moreover, there is a threshold $th(F)$ such that if every hyperedge has order at least $th(F)$, then $\sum_{h\in\cH} |h|= o(n^2)$.
The result of Gerbner and Palmer was based on the observation that for some constant $c$, we can place $c|h|$ subedges into each hyperedge $h$, such that the number of times an edge is used is bounded by a constant. The improvement by English, Gerbner, Methuku and Palmer is proved by placing $c|h|^2$ subedges into $h$. This is doable since there is no $|E(F)|$-heavy $F$ inside $h$, thus the Erd\H os-Stone-Simonovits theorem gives an upper bound on the number of heavy edges. Our arguments show that in the case $F$ is a book or a fan, there are only $c'|h|$ heavy edges inside $h$. Therefore, we can obtain sharper bounds for a specific weight.

Let $w(r)=\binom{r}{2}-\lfloor r/(|V(F)|-1)\rfloor \binom{|V(F)|-1}{2}-\binom{r-(|V(F)|-1)\lfloor r/(|V(F)|-1)\rfloor}{2}\ge \binom{r}{2}-(|V(F)|-2)r/2$ if $r>|V(F)|$ and $w(r)=r-1$ if $r\le |V(F)|$. Then our arguments can be modified to show that $\sum_{h\in\cH}w(|h|)\le n^2/4+o(n^2)$ if $\cH$ is $B_t$-free or $F_t$-free. The crucial part is Proposition \ref{largebook}, with an $r$ in the statement. It is easy to see from its proof that the same conclusion holds even if we can have smaller hyperedges. Thus we apply it with $r=th(F)-1$, then we can obtain for the hyperedges $h$ of order less than $th(F)$, that the heavy edges form cliques of order at most $|h|-1$ and at most $|V(F)|-1$ inside $h$, similarly to the uniform case (with $o(n^2)$ exceptions). Therefore, $h$ contains at least $w(|h|)$ light edges. On the other hand, the sum of the number of light edges is at most the number of edges in the shadow graph, thus Theorem \ref{luwa} implies that $\sum_{h\in\cH, \, |h|<th(F)}w(|h|)\le n^2/4+o(n^2)$. For larger hyperedges, we have $\sum_{h\in\cH, \, |h|\ge th(F)}w(|h|)< \sum_{h\in\cH, \, |h|\ge th(F)}|h|^2=o(n^2)$.

\bigskip

\textbf{Funding}: Research supported by the National Research, Development and Innovation Office - NKFIH under the grants KH 130371, SNN 129364, FK 132060, and KKP-133819.

\end{document}